\newtheorem{theorem}{Theorem}[section]
\newtheorem{lemma}[theorem]{Lemma}
\newtheorem{proposition}[theorem]{Proposition}
\theoremstyle{definition}
\newtheorem{definition}[theorem]{Definition}
\newtheorem{example}[theorem]{Example}
\newtheorem{corollary}[theorem]{Corollary}
\theoremstyle{remark}
\newtheorem{remark}[theorem]{Remark}
\numberwithin{equation}{section}
\newcommand{\mathbbm}[1]{\text{\usefont{U}{bbm}{m}{n}#1}}
\newcommand{\cP}{ {\mathcal P} }
\newcommand{\bone}{ {\mathbbm{1}} }
\newcommand{\bE}{ {\mathbb{E}} }
\newcommand{\bN}{ {\mathbb{N}} }
\newcommand{\bQ}{ {\mathbb{Q}} }
\newcommand{\bR}{ {\mathbb{R}} }
\newcommand{\bu}{ {\bf u} }
\newcommand{\bZ}{ {\mathbb{Z}} }
\newcommand{\bP}{ {\mathbb{P}} }
\newcommand{\dd}{\overset{d}{=}}
\newcommand{\ddd}{\overset{d}{\Rightarrow}}
\newcommand{\dddd}{\overset{d}{\Leftarrow}}
\newcommand{\ee}{ \varepsilon }
\newcommand{\authorfootnotes}{\renewcommand\thefootnote{\@fnsymbol\c@footnote}}
\renewcommand{\email}[2][]{%
  \ifx\emails\@empty\relax\else{\g@addto@macro\emails{,\space}}\fi%
  \@ifnotempty{#1}{\g@addto@macro\emails{\textrm{(#1)}\space}}%
  \g@addto@macro\emails{#2}%
}
\title{On discrete-time self-similar processes with stationary increments}
\author{Yi Shen, Zhenyuan Zhang}
\address{Department of Statistics and Actuarial Science, University of Waterloo}
\email{yi.shen@uwaterloo.ca}
\email{z569zhan@edu.uwaterloo.ca}
\begin{document}

\subjclass[2010]{Primary 60G18,  60G10}
\keywords{self-similar, stationary increments, discrete-time}

\maketitle
%\begin{center}
%
%%  \normalsize
%%  \authorfootnotes
%
%
%
%\end{center}

\begin{abstract}
In this paper we study the self-similar processes with stationary increments in a discrete-time setting. Different from the continuous-time case, it is shown that the scaling function of such a process may not take the form of a power function $b(a)=a^H$. More precisely, its scaling function can belong to one of three types, among which one type is degenerate, one type has a continuous-time counterpart, while the other type is new and unique for the discrete-time setting. We then focus on this last type of processes, construct two classes of examples, and prove a special spectral representation result for the processes of this type. We also derive basic properties of discrete-time self-similar processes with stationary increments of different types.
\end{abstract}

\section{Introduction}
Self-similar processes has been an important research topic in stochastic processes for a long time, due to its technical tractability and various applications in areas such as finance and physics. A general introduction of self-similar processes can be found, for example, in \cite{emb} and \cite{sam16}.

Among self-similar processes, those having stationary increments, abbreviated as ``ss-si processes'', often attract special attention from the researchers. The ss-si processes combine two types of probability symmetries: self-similarity, corresponding to the invariance of the distribution under rescaling, and the stationarity of the increments, corresponding to the invariance of the distribution of the increments under translation. As a result, they possess many desirable properties and include commonly used processes such as fractional Brownian motions and stable L\'{e}vy processes.

The classical setting for self-similar processes is in continuous-time, \textit{i.e.}, $t\in[0,\infty)$. In this case, if a process $\mathbf X=\{X(t)\}_{t\geq 0}$ satisfies that for any $a>0$, there exists $b(a)>0$ such that $\{X(at)\}_{t\geq 0}\dd \{b(a)X(t)\}_{t\geq 0}$, then $\mathbf X$ is said to be self-similar. It is easy to show that if the process is in addition nontrivial and stochastically continuous at 0, then the only possible functions $b$ to make this condition hold are $b(a)=a^H$ for some $H\geq 0$ (\cite{emb}). Consequently, self-similar processes are also often defined as processes $\mathbf X$ such that $\{X(at)\}_{t\geq 0}\dd\{a^HX(t)\}_{t\geq 0}$. It should be noted, however, that the second definition of the self-similar processes is not what the term ``self-similar'' originally or literally means. It is taken as a definition simply because of the equivalence between the two definitions of self-similarity. Logically, if one takes the first definition as the original definition, then the second definition should be regarded as a property of self-similarity.

In this paper we consider dt-ss-si processes, the self-similar processes with stationary increments defined on $\bN_0$, the set of all non-negative integers, instead of on $[0,\infty)$. The self-similarity in discrete-time becomes $\{X_{nm}\}_{m\in\bN_0}\dd\{b(n)X_m\}_{m\in\bN_0}$, where $n$ can only be positive integers now. Interestingly, it turns out that when defined on $\bN_0$, the two definitions of self-similarity are no longer equivalent. More precisely, besides the case where $b(n)=n^H, H>0$ and the degenerate case where $b(n)\equiv 1$, a new possibility $b(n)=(|n|_p)^H$, where $H>0$ and $|n|_p$ is the $p$-adic norm of $n$, arises. As one can see from the later parts of this paper, this change from the continuous-time case is mainly due to the discretization of the possible rescaling factor and the drop of the continuity requirement, which no longer makes sense in the discrete-time setting.

As the new, nondegenerate type in the discrete-time setting, the case where $b(n)=(|n|_p)^H$, $H>0$ is further studied in this paper. Two classes of dt-ss-si processes having such scaling function $b$ are constructed. Moreover, we find that the dt-ss-si processes which are of this type and in $L^2$ have a very particular spectral representation. Very roughly speaking, such a process can always be decomposed into waves with periods of different powers of $p$ and magnitudes decreasing in period.

The rest of this paper is organized as follows: Section 2 introduces basic settings and notations. Section 3 establishes the classification theorem, followed by an embedding result for dt-ss-si processes with $b(n)=n^H$ and basic properties for dt-ss-si processes of different types. Sections 4 and 5 focus on the dt-ss-si processes with $b(n)=(|n|_p)^H$. We give two classes of such processes in Section 4, then state and prove the spectral representation using the notion of almost periodic functions in Section 5.

\section{Basic settings and notations}
Let $\bN_0=\{0,1,\dots\}$ be the set of non-negative integers and $\bN=\{1,2,\dots\}$ be the set of positive integers. We first extend the definition of self-similarity to discrete-time. In order to ensure that the rescaled process is comparable to the original process, the scaling factor must be a positive integer in this case. Therefore, we have
\begin{definition}
A real-valued discrete-time stochastic process $\mathbf X=\{X_m\}_{m\in\bN_0}$ is called a {\it discrete-time self-similar process}, if
for any $n\in\bN$, there exists $b(n)>0$, such that
\begin{equation}
\{X_{nm}\}_{m\in\bN_0}\dd \{b(n)X_m\}_{m\in \bN_0}\label{ss}.
\end{equation}
\end{definition}

Here and later, ``$\dd$'' means equality in the sense of distribution, \textit{i.e.}, the two sides of this symbol have the same distribution.

Denote by $\cP=\{2,3,5,\dots\}$ the set of all primes. It can be easily seen from (\ref{ss}) that the scaling function $b(n)$ for a discrete-time self-similar process must be completely multiplicative, \textit{i.e.}, $b(m)b(n)=b(mn)$ for all $m,n\in\bN$. Consequently, for $n=\prod_{p_i\in\mathcal P}p_i^{r_i}$, $b(n)=\prod_{p_i\in \mathcal P}(b(p_i))^{r_i}$, hence $b(n)$ is determined by its values on $\mathcal P$. On the other hand, any completely multiplicative function $b: \bN\to \mathbb R^+$ is a legitimate scaling function for some discrete-time self-similar process. A simple example is given by $X_0=0, X_n=b(n)X_1$ for $n\geq 1$.

Recall that a discrete-time stochastic process $\mathbf X=\{X_m\}_{m\in\bN_0}$ is said to have {\it stationary increments}, if its increment process is stationary. In other words, for any $k\in\bN$,
\begin{equation}
\{X_{m+1}-X_m\}_{m\in\bN_0}\dd\{X_{m+k+1}-X_{m+k}\}_{m\in\bN_0}.\label{si}
\end{equation}

In this paper, we are mainly interested in the discrete-time self-similar processes with stationary increments, {\it dt-ss-si} processes. They are the processes that satisfy both (\ref{ss}) and (\ref{si}).

\section{Classification and properties of dt-ss-csi processes}
In this part we show that the dt-ss-si processes can be classified into three types according to their scaling properties. Among these three types, one has a continuous-time counterpart, one is degenerate, while the other only exists for the discrete case. It turns out that the results in this section actually work for a larger family of processes for which both the self-similarity and the stationarity of the increments only hold marginally. Moreover, the stationarity of the increments can be relaxed to the cyclostationarity with any fixed integer period. We begin this section by generalizing these notions, and will work with the processes which are marginally self-similar with marginally cyclostationary increments in this section.

\begin{definition}
Let $\mathbf X=\{X_m\}_{m\in\bN_0}$ be a discrete-time stochastic process. If (\ref{ss}) holds marginally, \textit{i.e.}, for any $m\in\bN_0$ and $n\in \bN$,
\begin{equation}
X_{nm}\dd b(n)X_m\label{e:marginalss},
\end{equation}
then $\mathbf X$ is called \textit{marginally self-similar}.
\end{definition}

\begin{definition}
Let $\tau\in\bN$. A stochastic process $\mathbf X=\{X_m\}_{m\in\bN_0}$ is said to have {\it marginally cyclostationary increments with period $\tau$}, if for any $k,m\in\bN_0$,
\begin{equation}
X_{m+1}-X_m\dd X_{m+k\tau+1}-X_{m+k\tau}.\label{csi}
\end{equation}
\end{definition}

A discrete-time, marginally self-similar process having marginally cyclostationary increments with period $\tau$ is denoted as \textit{dt-ss-csi($\tau$)}, or \textit{dt-ss-csi} when it is not necessary to specify the value of $\tau$.

The case $\mathbf X\equiv 0$ being trivial, we assume the distribution of $X_1$ is nondegenerate. For any probability distribution $F$ on $(\mathbb R, \mathcal B(\mathbb R))$ and $a\in\mathbb R$, denote by ``$Y/a\sim F$'' the relation that $Y\in B=F(aB)$ for any Borel set $B\subset\mathbb R$, where $aB=\{ax: x\in B\}$. Here and later, we always identify a probability distribution on $(\mathbb R, \mathcal B(\mathbb R))$ with its distribution function.\\

The main result of this section is the following Ostrowski-type classification theorem. Note that since the marginal distributions of $\mathbf X$ are not necessarily in $L^1$, we do not have the triangle inequality required by a direct application of the classical Ostrowski's theorem.

\begin{theorem}\label{t:classification}
The scaling function $b(n)$ of a dt-ss-csi($\tau$) process must be one of the followings:
\begin{enumerate}
\item $b(n)=1$ for all $n\in\bN$.
\item There exist a unique prime $p$ and $H>0$ such that $b(n)=(|n|_p)^H$. In other words, $b(p)<1$ and $b(q)=1$ for $q\in\mathcal P, q\neq p$.
\item There exists $H>0$ such that $b(n)=n^H$ for all $n\in\bN$.
\end{enumerate}
Conversely, for any completely multiplicative function $b(n)$ on $\bN$ satisfying one of the conditions above, there exists a non-trivial dt-ss-si process having $b(n)$ as its scaling function.
\end{theorem}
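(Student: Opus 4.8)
The plan is to prove the two directions separately. For the forward direction, recall that $b$ is completely multiplicative, so it is determined by its values on $\mathcal P$, and it suffices to understand the set $S=\{q\in\mathcal P:b(q)\neq 1\}$ together with the values $b(q)$ for $q\in S$. The first step is to show that $b(q)\leq 1$ for every prime $q$ — equivalently $b(n)\leq 1$ for all $n$. Here I would use the marginal self-similarity $X_{nm}\dd b(n)X_m$ together with the fact that $\mathbf X$ is additive over increments whose law is $\tau$-cyclostationary: writing $X_{nm}$ as a sum of increments and comparing tails, if $b(n)>1$ for some $n$ the marginal law of $X_m$ would have to be simultaneously "spread out" by a factor $b(n)^k\to\infty$ and yet be built from a bounded number of identically-distributed increment blocks, forcing $X_1$ degenerate. (This is the point the remark before the theorem flags: without $L^1$ we cannot just invoke Ostrowski, so this tail/scaling argument replaces the triangle inequality.) Once $b\leq 1$, the three cases are: $S=\varnothing$ (case 1); $S$ a single prime $p$ (case 2); $|S|\geq 2$.

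The heart of the matter is to rule out $|S|\geq 2$ and, when $|S|=\{p\}$, to pin down $b(p)=|p|_p^{H}=p^{-H}$, i.e. that $b$ restricted to powers of $p$ is a genuine power function — the freedom in case 2 is only in which prime $p$ and which exponent $H$. The Ostrowski-type input is: if $q_1\neq q_2$ are primes with $b(q_1),b(q_2)<1$, choose integers $m$ large and use that $q_1^a/q_2^c\to 1$ along suitable $a,c$ (density of $\{a\log q_1-c\log q_2\}$ in $\mathbb R$) to compare $X_{q_1^a}$ and $X_{q_2^c}$ via a third index $m$ that is a common "near-multiple"; marginal self-similarity then forces $b(q_1)^{1/\log q_1}=b(q_2)^{1/\log q_2}$, i.e. $b(q)=q^{-H}$ on $S$ with a common $H>0$ — but then $b(n)=n^{-H}$ would have to hold for $n$ with prime factors only in $S$, and combined with $b\equiv 1$ off $S$ one checks this contradicts complete multiplicativity unless $S$ is all of $\mathcal P$ (case 3) or a single prime (case 2). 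The main obstacle I expect is making the "$q_1^a\approx q_2^c$" comparison rigorous at the level of distributions rather than real numbers: one cannot literally set $q_1^a=q_2^c$, so the argument must run through a carefully chosen index $m$ and exploit that $X_{q_1^a m}\dd b(q_1)^a X_m$ and $X_{q_2^c m}\dd b(q_2)^c X_m$ with $q_1^a m$ and $q_2^c m$ close, controlling the discrepancy via the increment stationarity (cyclostationarity with period $\tau$ — so one should arrange $m$ divisible by $\tau$, or absorb a bounded correction). This is where the absence of $L^1$ bites hardest and where the bulk of the estimates will live.

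For the converse, the task is, given a completely multiplicative $b$ of one of the three forms, to exhibit a nontrivial dt-ss-si process with that scaling function. For case 1, $b\equiv 1$: take $\{X_m\}$ to be a one-sided stationary sequence viewed via $X_m=\sum_{j<m}\xi_j$? — no, I want self-similarity; instead take $X_m\equiv X_0=Y$ for a fixed nondegenerate $Y$, or more simply $X_0=0$ and $X_m=\sum_{j=1}^m \xi$ with all increments equal to a single random variable $\xi$ repeated — one checks $X_{nm}\dd X_m$ fails; the clean choice is an i.i.d. increment process only when $b(n)=n^{1/2}$, so for case 1 I would instead take the trivial-in-the-self-similar-sense example where the increments form a stationary sequence that is exchangeable enough, e.g. $X_m = m Z$? that gives $b(n)=n$. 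The honest statement: for case 1 use a process constant in $m$ after $0$, namely set all increments equal to the constant $0$ except design $X$ so that $X_{nm}$ and $X_m$ have the same law for all $n$ — the product example $X_m=b(m)X_1$ from the text already handles marginal self-similarity, and one upgrades it; I would cite the examples to be constructed in Sections 4–5 for case 2, and for case 3 use fractional Brownian motion (or stable Lévy) sampled at integer times, which is ss-si in continuous time hence dt-ss-si with $b(n)=n^H$, $H\in(0,1]$, extending to all $H>0$ by the usual $X_m = m^H X_1$-type or by integrating fBm. For case 2 the examples are exactly the point of Section 4, so I would construct there a process built from independent "scales": $X_{p^k\ell}$ built so that rescaling by $p$ shrinks by $p^{-H}$ while factors of primes $q\neq p$ act trivially — a direct sum over the $p$-adic tree — and verify (\ref{ss}) and (\ref{si}) by hand. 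The main obstacle on this side is producing a case-2 example with genuinely stationary (not merely cyclostationary) increments; I would defer the details to the constructions in Section 4 and here only record that such constructions exist.
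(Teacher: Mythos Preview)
Your forward direction has a fundamental error at the very first step. You claim that $b(q)\leq 1$ for every prime $q$, but this is false: case (3) of the theorem is precisely $b(n)=n^H$ with $H>0$, so $b(q)=q^H>1$ for every prime. Fractional Brownian motion (or any $\alpha$-stable L\'evy process) sampled at the integers already gives a nontrivial dt-ss-si process with $b(n)=n^H>1$, so your tail/scaling heuristic (``spread out by $b(n)^k$ yet built from boundedly many increment blocks'') cannot be right; dependent or heavy-tailed increments allow the spread of $X_N$ to grow faster than $\sqrt{N}$. Consequently your trichotomy ``$S=\varnothing$, $|S|=1$, $|S|\ge 2$'' under the standing hypothesis $b\le 1$ never reaches case (3), and the later claim that ``$b(n)=n^{-H}$ \dots\ contradicts complete multiplicativity unless $S=\mathcal P$'' is also wrong: the function that equals $q^{-H}$ on $S$ and $1$ off $S$ is completely multiplicative for \emph{any} $S\subset\mathcal P$, so no contradiction arises there.

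The paper's route is structurally different. It sets $f(p)=\log_p b(p)$ and studies $A=\{f(p):p\in\mathcal P\}$. Two quantitative lemmas, proved via a joint-mixability inequality (no moments needed), give (i) for every $k>1$ and $m$, a bound $b(n+m)\le k\,b(n)+c_m$ uniformly in $n$, and (ii) a lower bound $b(n\tau)\vee b(n\tau+m)\ge d_m>0$. From (i) one first shows $\sup A<\infty$, then that if $\sup A>0$ the supremum is attained and in fact $f(q)=\sup A$ for \emph{all} primes $q$, yielding case (3). If instead $\sup A\le 0$ (so $b\le 1$), one uses (ii) together with B\'ezout to show at most one prime can have $b(p)<1$, giving cases (1) and (2). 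Your density-of-$q_1^a/q_2^c$ idea is aimed at the right phenomenon in the $b\le1$ regime, but you would still need a replacement for (ii) to extract a contradiction at the level of distributions, and you must first correctly isolate the $b>1$ regime rather than exclude it.

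For the converse, your instincts are right that one defers to explicit constructions (the paper cites Examples \ref{ex:trivial}, \ref{e:iid}, \ref{e:periodic} and Theorem \ref{t:embedding}); note however that the case-(1) example is simply an i.i.d.\ sequence $\{X_n\}$, not an i.i.d.-increment process.
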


Some preparatory results are needed to prove Theorem \ref{t:classification}.

\begin{proposition}\label{p:general}
Let $G_1, G_2, G_3$ be probability distributions on $\mathbb R$. Assume $G_1$ is not concentrated at 0. Then there exist constants $k_2,k_3$, such that for
$a_1, a_2, a_3\geq 0$,
\begin{equation}\label{e:feasiblecondition}
a_1>k_2 a_2+k_3 a_3
\end{equation}
implies that there do not exist random variables $Y_i$, $i=1,2,3$, satisfying $Y_i/a_i\sim G_i$ and $\sum_{i=1}^3Y_i=0$.
\end{proposition}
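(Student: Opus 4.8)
The plan is to read off $k_2,k_3$ from the spread of $G_1$ together with the tails of $G_2$ and $G_3$, and then, assuming such $Y_i$ exist while (\ref{e:feasiblecondition}) holds, to reach a contradiction on an event of positive probability using nothing but the triangle inequality. The constants must be fixed \emph{before} any $a_i$ is seen, so that they depend only on $G_1,G_2,G_3$. Since $G_1$ is not concentrated at $0$, the sets $\{x:|x|\ge 1/n\}$ increase to $\bR\setminus\{0\}$, a set of positive $G_1$-mass, so continuity from below yields some $c>0$ with $q:=G_1(\{x:|x|\ge c\})>0$. Since $G_2$ and $G_3$ are probability measures on $\bR$, hence tight, we may pick $M_2,M_3>0$ with $G_i(\{x:|x|>M_i\})<q/3$ for $i=2,3$. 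Then set $k_2:=M_2/c$ and $k_3:=M_3/c$.

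Now suppose (\ref{e:feasiblecondition}) holds and that there are random variables $Y_1,Y_2,Y_3$ (on a common space) with $Y_i/a_i\sim G_i$ and $Y_1+Y_2+Y_3=0$. Since the right-hand side of (\ref{e:feasiblecondition}) is nonnegative, $a_1>0$. Consider first the main case $a_2,a_3>0$, so that $Z_i:=Y_i/a_i$ has law $G_i$ and $a_1Z_1+a_2Z_2+a_3Z_3=0$. On the event
\[
E:=\{|Z_1|\ge c\}\cap\{|Z_2|\le M_2\}\cap\{|Z_3|\le M_3\}
\]
the triangle inequality gives $a_1c\le a_1|Z_1|=|a_2Z_2+a_3Z_3|\le a_2M_2+a_3M_3=c\,(k_2a_2+k_3a_3)<c\,a_1$, which is absurd; hence $\bP(E)=0$. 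On the other hand, bounding $\bP(E)$ below through a union bound on the complements — which uses only the marginal laws $G_1,G_2,G_3$, so the (unspecified) joint distribution of $(Z_1,Z_2,Z_3)$ plays no role — gives $\bP(E)\ge q-\tfrac q3-\tfrac q3>0$, a contradiction. Thus no such $Y_i$ exist. The degenerate cases $a_2=0$ or $a_3=0$ are handled the same way: then the corresponding $Y_i$ is forced to be $0$ a.s. (or the relation $Y_i/a_i\sim G_i$ has no solution at all, making the claim vacuous), so one is left with an identity in at most two of the $Z_i$, and the argument above applies with $E$ replaced by the intersection of the relevant one or two of the listed events, still of probability at least $q/3>0$.

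I do not expect a substantive obstacle here. The analytic core — non-degeneracy of $G_1$ combined with tightness of $G_2,G_3$, fed into a union bound and the triangle inequality — is elementary and short. The only points requiring care are purely formal: that the constants $k_2,k_3$ be chosen uniformly in the $a_i$ (hence fixed at the outset), and the minor bookkeeping needed to make sense of $Y_i/a_i\sim G_i$ when $a_i=0$.
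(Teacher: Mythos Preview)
Your proof is correct and takes a genuinely different route from the paper's. The paper works with quantile functions: it introduces the average quantile functional $\bar Q_i([c,d])=\frac{1}{d-c}\int_c^d Q_i(t)\,dt$ and invokes a necessary condition for joint mixability with center $0$ from \cite{puc} (their Proposition~3.3), which produces an inequality of the form $a_1\bar Q_1([\cdot,\cdot])\le -a_2\bar Q_2([\cdot,\cdot])-a_3\bar Q_3([\cdot,\cdot])$; the constants $k_2,k_3$ are then read off as ratios of these averaged quantiles. Your argument bypasses this machinery entirely: you fix a level set $\{|x|\ge c\}$ of positive $G_1$-mass, truncate $G_2,G_3$ by tightness, and get a contradiction on an event of positive probability via the triangle inequality and a union bound on marginals.

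Your approach is shorter, self-contained, and does not rely on the external reference. The trade-off is that the paper's quantile setup is not gratuitous: it is immediately recycled in the next result (Proposition~\ref{p:samedist}), where under the symmetry $G_1(B)=G_2(-B)$ one needs the sharper conclusion that \emph{any} $k_2>1$ works for some $k_3$. That refinement is what drives Corollary~\ref{c:feasible} and ultimately the classification theorem. With your method $k_2=M_2/c$ is typically large, and pushing it down to any value above $1$ would require a separate argument; the quantile framework handles this uniformly by exploiting continuity of $Q_1$ at a well-chosen point. So both proofs are valid for the proposition as stated, but the paper's choice is motivated by what comes next.
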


\begin{proof}
Let $Q_i(t)$ be the quantile function of $G_i$:
$$
Q_i(t)=\inf\{x\in\mathbb R: G_i(x)\geq t\}, \quad t\in(0,1), i=1,2,3.
$$
Note that $Q_i$ is non-decreasing. Moreover, as $G_1$ is not concentrated at 0, either $G_1((0,\infty))>0$, or $G_1((-\infty, 0))>0$.

First assume $G_1((0,\infty))>0$. Then there exists $s\in(0,1)$, such that $Q_1(s)>0$. For $0<c<d<1$, define the average quantile functional from $c$ to $d$:
$$
\bar{Q}_i([c,d])=\frac{1}{d-c}\int_c^d Q_i(t)dt.
$$

We use a result in \cite{puc}, where $\sum_{i=1}^3Y_i=0$ is translated into the ``joint mixability'' of the distributions of $Y_1, Y_2, Y_3$ with center 0. By linearity of the average quantile functional, the average quantile functionals for $Y_i$ satisfying $Y_i/a_i\sim G_i$, denoted by $\bar{q}_{i}$, satisfy
$$
\bar{q}_{i}([c,d])=a_i\bar{Q}_i([c,d]), \quad 0<c<d<1.
$$

Take $\beta_1=s$, $\beta_2, \beta_3>0$ such that $\beta=\sum_{i=1}^3\beta_i<1$. By Proposition 3.3 in \cite{puc}, a necessary condition for the distributions of $Y_i, i=1,2,3$ to be jointly mixable is
\begin{equation}\label{e:generala}
\sum_{i=1}^3\bar{q}_{i}([\beta_i, \beta_i+1-\beta])\leq 0.
\end{equation}
(\ref{e:generala}) implies that
\begin{equation}\label{e:detailedcond}
a_1\bar{Q}_1([s, s+1-\beta])\leq -\sum_{i=2,3}a_i\bar{Q}_i([\beta_i, \beta_i+1-\beta]).
\end{equation}
Note that $\bar{Q}_1([s, s+1-\beta])>0$ by construction. Thus, it suffices to take
$$
k_i=-\frac{\bar{Q}_i([\beta_i, \beta_i+1-\beta])}{\bar{Q}_1([s, s+1-\beta])}, \quad i=2,3.
$$

For the other case, assume that $G_1((-\infty, 0))>0$. As a result, there exists $s\in(0,1)$, such that $Q_1(s)<0$. Similarly as in the previous case, Proposition 3.3 in \cite{puc} gives another necessary condition for the distributions of $Y_i, i=1,2,3$ to be jointly mixable:
\begin{equation}\label{e:generalb}
\sum_{i=1}^3\bar{q}_{i}([\beta-\beta_i, 1-\beta_i])\geq 0.
\end{equation}
By taking $\beta_1=1-s$, $\beta_2, \beta_3>0$ such that $\beta=\sum_{i=1}^3\beta_i<1$, (\ref{e:generalb}) becomes
$$
a_1\bar{Q}_1([s-(1-\beta), s])\geq -\sum_{i=2,3}a_i\bar{Q}_i([\beta-\beta_i, 1-\beta_i]).
$$
Recall that since $Q_1(s)<0$ and $Q_1$ is non-decreasing, $\bar{Q}_1([s-(1-\beta), s])<0$. Hence it suffices to take
$$
k_i=-\frac{\bar{Q}_i([\beta-\beta_i, 1-\beta_i])}{\bar{Q}_1([s-(1-\beta), s])}, \quad i=2,3.
$$
\end{proof}

\begin{proposition}\label{p:samedist}
Under the same setting as in Proposition \ref{p:general}, if in addition, $G_1$ and $G_2$ satisfy $G_1(B)=G_2(-B)$ for any Borel set $B\subset\mathbb R$, then for every $k_2>1$, there exists $k_3\in\mathbb R$ such that the result in Proposition \ref{p:general} holds.
\end{proposition}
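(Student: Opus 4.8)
The plan is to rerun the argument behind Proposition~\ref{p:general}, but to choose the masses $\beta_1,\beta_2,\beta_3$ so as to exploit the reflection symmetry $G_1(B)=G_2(-B)$ and push the coefficient of $a_2$ down toward $1$. The one new ingredient is that this symmetry means $-Z\sim G_2$ whenever $Z\sim G_1$, so $Q_2(t)=-Q_1(1-t)$ for a.e.\ $t\in(0,1)$, and a change of variables $u=1-t$ gives
\[
\bar Q_2([c,d]) \;=\; -\,\bar Q_1([\,1-d,\,1-c\,]),\qquad 0<c<d<1.
\]

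First I would treat the case $G_1((0,\infty))>0$, fixing $s\in(0,1)$ with $Q_1(s)>0$ and $\beta=(1+s)/2\in(s,1)$. For $\varepsilon\in(0,\beta-s)$, set $\beta_1=s$, $\beta_2=\beta-s-\varepsilon$, $\beta_3=\varepsilon$, so that $\beta_1,\beta_2,\beta_3>0$ and $\beta_1+\beta_2+\beta_3=\beta<1$. Feeding these into the necessary condition (\ref{e:generala}) for joint mixability and using the identity above to rewrite $\bar Q_2([\beta_2,\beta_2+1-\beta])=-\bar Q_1([\beta-\beta_2,1-\beta_2])=-\bar Q_1([s+\varepsilon,\,s+\varepsilon+1-\beta])$, inequality (\ref{e:generala}) becomes, after dividing by the strictly positive quantity $\bar Q_1([s,s+1-\beta])$,
\[
a_1 \;\le\; k_2(\varepsilon)\,a_2 + k_3(\varepsilon)\,a_3,
\]
where
\[
k_2(\varepsilon)=\frac{\bar Q_1([s+\varepsilon,\,s+\varepsilon+1-\beta])}{\bar Q_1([s,\,s+1-\beta])},
\qquad
k_3(\varepsilon)=-\frac{\bar Q_3([\varepsilon,\,\varepsilon+1-\beta])}{\bar Q_1([s,\,s+1-\beta])}.
\]
Exactly as in Proposition~\ref{p:general}, this shows that $a_1>k_2(\varepsilon)a_2+k_3(\varepsilon)a_3$ rules out the existence of $Y_i$ with $Y_i/a_i\sim G_i$ and $\sum_{i=1}^3 Y_i=0$.

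To finish, I would let $\varepsilon\downarrow0$. Since $Q_1$ is non-decreasing, shifting the interval of integration to the right only increases the average, so $k_2(\varepsilon)\ge1$ for all $\varepsilon\in(0,\beta-s)$; and since a finite non-decreasing function is bounded, hence integrable, on every compact subinterval of $(0,1)$, absolute continuity of the Lebesgue integral gives that the numerator of $k_2(\varepsilon)$ converges to its denominator (the two domains of integration differ by a set of measure $2\varepsilon$), so $k_2(\varepsilon)\to1$. Given $k_2>1$, I would pick $\varepsilon$ with $k_2(\varepsilon)<k_2$ and put $k_3:=k_3(\varepsilon)\in\mathbb R$; then, since $a_2\ge0$ and $k_2\ge k_2(\varepsilon)$, the bound $a_1>k_2a_2+k_3a_3$ forces $a_1>k_2(\varepsilon)a_2+k_3a_3$, and the conclusion of Proposition~\ref{p:general} holds for this pair $(k_2,k_3)$. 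The case $G_1((-\infty,0))>0$ is symmetric: take $s$ with $Q_1(s)<0$, use (\ref{e:generalb}) with $\beta_1=1-s$, $\beta=1-s/2$, $\beta_2=s-1+\beta-\varepsilon$, $\beta_3=\varepsilon$, so that the two length-$(1-\beta)$ intervals $[\beta_2,\beta_2+1-\beta]$ and $[s-(1-\beta),s]$ differ only by a leftward shift of $\varepsilon$, and run the same limiting argument. I do not expect a genuine obstacle beyond this: the only real content is the quantile identity displayed above, and the rest is a rerun of Proposition~\ref{p:general} plus continuity of an integral, together with the minor bookkeeping that keeps all $\beta_i$ positive and $\beta<1$ while the relevant intervals nearly coincide. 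The threshold $1$ is moreover optimal, since with $a_3=0$ the equation $Y_1+Y_2=0$ is solvable whenever $a_1\le a_2$, ruling out any coefficient of $a_2$ strictly below $1$.
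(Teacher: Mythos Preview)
Your proof is correct and follows essentially the same approach as the paper: both use the quantile reflection identity $\bar Q_2([c,d])=-\bar Q_1([1-d,1-c])$ and then choose the masses $\beta_i$ so that the two $Q_1$-intervals appearing in (\ref{e:generala}) (respectively (\ref{e:generalb})) nearly coincide, forcing the coefficient in front of $a_2$ to be arbitrarily close to $1$. The only cosmetic difference is how ``nearly coincide'' is arranged: the paper shrinks both intervals to a point of continuity of $Q_1$ (taking $\beta_1=s-\epsilon$, $\beta_2=1-s-\epsilon$, $\beta_3=\epsilon$ so $1-\beta=\epsilon$), whereas you keep the interval length fixed and slide by $\varepsilon$, invoking $L^1$-continuity of translation; both devices yield the same conclusion.
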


\begin{proof}
We prove the case where $G_1((0,\infty))>0$. The case where $G_1((-\infty,0))>0$ is symmetric.

In the proof of Proposition \ref{p:general}, since $Q_1$ is non-decreasing and not constantly 0 on $(0,1)$, there exists $s\in\mathbb R$ such that $Q_1(s)>0$, and $Q_1$ is continuous at $s$. As a result, there exists $\epsilon\in (0, s\wedge (1-s))$ satisfying
$$
\bar{Q}_1([s-\epsilon, s])>\frac{1}{k_2}\bar{Q}_1([s,s+\epsilon])>0.
$$
Moreover, note that $\bar{Q}_1([c,d])=-\bar{Q}_2([1-d, 1-c])$ for all $0<c<d<1$. Taking $\beta_1=s-\epsilon$, $\beta_2=1-s-\epsilon$ and $\beta_3=\epsilon$, (\ref{e:generala}) becomes
$$
a_1\bar{Q}_1([s-\epsilon,s])\leq a_2\bar{Q}_1([s, s+\epsilon])-a_3\bar{Q}_3([\epsilon, 2\epsilon]).
$$
It suffices to take $k_3=-\frac{\bar{Q}_3([\epsilon, 2\epsilon])}{\bar{Q}_1([s-\epsilon,s])}$.
\end{proof}

As immediate consequences of Propositions \ref{p:general} and \ref{p:samedist}, we have

\begin{corollary}\label{c:feasible}
Let $\{X_n\}_{n\in \bN_0}$ be a dt-ss-csi($\tau$) process, and $b(n), n\in\bN$ be its scaling function. Then for any $m\in\bN, k>1$, there exists $c_m\in \mathbb R$, such that
$$
b(n+m)\leq k b(n)+c_m, \quad n\in\bN_0.
$$
\end{corollary}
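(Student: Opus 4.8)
The plan is to apply Proposition~\ref{p:samedist} to the obvious identity $X_{n+1}-X_n-(X_{n+1}-X_n)=0$ and then to bootstrap the resulting estimate from $m=1$ up to arbitrary $m$. Fix $k>1$ and $m\in\bN$, and note $k^{1/m}>1$. It suffices to produce a constant $c\in\mathbb R$ (depending on $k$ and $m$) with $b(n+1)\le k^{1/m}b(n)+c$ for every $n\in\bN$: iterating this $m$ times yields
\[
b(n+m)\le \bigl(k^{1/m}\bigr)^{m}b(n)+c\sum_{j=0}^{m-1}\bigl(k^{1/m}\bigr)^{j}=k\,b(n)+c_m,\qquad c_m:=c\,\frac{k-1}{k^{1/m}-1},
\]
which is the asserted inequality for $n\in\bN$, and the case $n=0$ is then covered after enlarging $c_m$ (with the convention $b(0):=0$, or trivially when $b\equiv1$).

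To obtain such a $c$, fix $n\in\bN$ and set $Y_1=X_{n+1}$, $Y_2=-X_n$, $Y_3=-(X_{n+1}-X_n)$, so that $Y_1+Y_2+Y_3=0$. By marginal self-similarity, $X_{n+1}\dd b(n+1)X_1$ and $X_n\dd b(n)X_1$, hence $Y_1/b(n+1)\sim G_1$ and $Y_2/b(n)\sim G_2$ with $G_1=\mathrm{Law}(X_1)$ and $G_2=\mathrm{Law}(-X_1)$. Since $X_1$ is nondegenerate, $G_1$ is not concentrated at $0$, and evidently $G_1(B)=G_2(-B)$ for every Borel $B$, so $(G_1,G_2)$ is in the setting of Proposition~\ref{p:samedist}. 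For the third term, marginal cyclostationarity of the increments with period $\tau$ gives $X_{n+1}-X_n\dd X_{j+1}-X_j$ for $j=n\bmod\tau$, so that $Y_3/1\sim G_3^{(j)}:=\mathrm{Law}\bigl(-(X_{j+1}-X_j)\bigr)$, and as $n$ ranges over $\bN$ this distribution takes only the $\tau$ values $G_3^{(0)},\dots,G_3^{(\tau-1)}$. Applying Proposition~\ref{p:samedist} with $k_2=k^{1/m}$ and each of these finitely many choices of $G_3$ produces constants $k_3^{(0)},\dots,k_3^{(\tau-1)}\in\mathbb R$; put $c:=\max_{0\le j<\tau}k_3^{(j)}$. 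For each $n\in\bN$ the triple $(Y_1,Y_2,Y_3)$ is a solution of $\sum_i Y_i=0$ with $Y_i/a_i\sim G_i$ for $(a_1,a_2,a_3)=(b(n+1),b(n),1)$ and $G_3=G_3^{(n\bmod\tau)}$; so by contraposition the inequality $a_1>k^{1/m}a_2+k_3^{(n\bmod\tau)}a_3$ must fail, i.e.\ $b(n+1)\le k^{1/m}b(n)+k_3^{(n\bmod\tau)}\le k^{1/m}b(n)+c$.

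The point that forces the two-step structure, and which I expect to be the only real obstacle, is the absence of any integrability hypothesis: one cannot bound the increment $X_{n+1}-X_n$ in $L^1$, so the quantile-based Proposition~\ref{p:samedist} (rather than a triangle-inequality argument) is essential, and to invoke it one needs the law of the ``error term'' to range over a finite — or at least uniformly controlled — family as $n\to\infty$. Marginal cyclostationarity with period $\tau$ delivers exactly this for a single increment $X_{n+1}-X_n$ (only $\tau$ laws occur), but it says nothing about the joint law behind $X_{n+m}-X_n$ for $m\ge2$, whose distribution could depend on $n$ in an uncontrolled fashion. Passing through the $m=1$ estimate and paying for it by weakening the multiplicative constant from $k$ to $k^{1/m}$ — still greater than $1$ — is what circumvents this.
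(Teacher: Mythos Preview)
Your argument is correct, and it takes a genuinely different route from the paper's. The paper applies Proposition~\ref{p:samedist} \emph{once}, directly to the length-$m$ increment: it sets $G_3$ equal to the law of $X_{n'}-X_{n'+m}$ with $n'=n\bmod\tau$, invoking the assertion $X_{n+m}-X_n\dd X_{n'+m}-X_{n'}$ and then taking $c_m=\bigvee_{n'=0}^{\tau-1}c_{n',m}$. As you observed in your final paragraph, this assertion does not literally follow from the stated definition of marginally cyclostationary increments, which only controls the one-step increments $X_{j+1}-X_j$; the distribution of a length-$m$ increment is a sum of $m$ such terms whose \emph{joint} law is unconstrained by the marginal hypothesis. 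Your two-step scheme---first obtaining $b(n+1)\le k^{1/m}b(n)+c$ from the single-step increment (where marginal cyclostationarity applies verbatim), then iterating $m$ times and absorbing the geometric sum into $c_m$---sidesteps this entirely. The cost is the extra bootstrapping and the need to start from the weakened constant $k^{1/m}>1$; the gain is that your proof is fully rigorous under the hypotheses exactly as written, whereas the paper's shorter argument implicitly relies on a strengthening of the cyclostationarity assumption (or on the reader supplying precisely the iteration you carried out).
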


\begin{proof}
By the cyclostationarity of the increments, $X_{n+m}-X_{n}\dd X_{n'+m}-X_{n'}$, where $n'$ is the residue of $n$ modulo $\tau$. Then by Proposition \ref{p:samedist} with $G_1, G_2, G_3$ being the distributions of $X_1, -X_1, X_{n'}-X_{n'+m}$, $a_1=b(n+m), a_2=b(n), a_3=1$, there exists $c_{n',m}\in \mathbb R$ such that $b(n+m)\leq k b(n)+c_{n',m}$. It remains to take $c_m=\bigvee_{n'=0}^{\tau-1}c_{n',m}$.
\end{proof}

\begin{corollary}\label{c:smallfeasible}
Let $\{X_n\}_{n\in \bN_0}$ be a dt-ss-csi($\tau$) process, and $b(n), n\in\bN$ be its scaling function, which is not identically 1. Then for any $m\in\bN$, there exists $d_m>0$, such that $b(n\tau)\vee b(n\tau+m)\geq d_m$ for all $n\in\bN_0$.
\end{corollary}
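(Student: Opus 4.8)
The plan is to extract a \emph{lower} bound from Proposition~\ref{p:general} by feeding it the increment $X_{n\tau+m}-X_{n\tau}$. The structural observation I would use is that $n\tau\equiv 0\pmod{\tau}$ for every $n$, so, by the cyclostationarity of the increments invoked exactly as in the proof of Corollary~\ref{c:feasible} (with residue $0$), the increment $X_{n\tau+m}-X_{n\tau}$ has one fixed distribution, that of $X_m-X_0$, for all $n\in\bN_0$. Since $b$ is not identically $1$, there is $n_0$ with $b(n_0)\neq 1$, and marginal self-similarity gives $X_0\dd b(n_0)X_0$, which forces $X_0=0$ a.s. (the law of $X_0$ is invariant under a nontrivial rescaling); hence $X_{n\tau+m}-X_{n\tau}\dd X_m$ for every $n$. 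Finally $X_m\dd b(m)X_1$ with $b(m)>0$ and $X_1$ nondegenerate, so $X_m$, and therefore $-X_m$, is not concentrated at $0$.

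I would then apply Proposition~\ref{p:general} with $G_1$ the law of $-X_m$, $G_2$ the law of $X_1$, and $G_3$ the law of $-X_1$; this produces constants $k_2,k_3$ that depend only on these three laws, hence only on $m$, not on $n$. Fixing $n\in\bN$ and setting $a_1=1$, $a_2=b(n\tau+m)$, $a_3=b(n\tau)$ (all strictly positive), the variables $Y_1=X_{n\tau}-X_{n\tau+m}$, $Y_2=X_{n\tau+m}$, $Y_3=-X_{n\tau}$ satisfy $Y_1+Y_2+Y_3=0$, and $Y_i/a_i\sim G_i$ for $i=1,2,3$ (using marginal self-similarity for $Y_2,Y_3$ and the first paragraph for $Y_1$). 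Since such $Y_i$ exist, the contrapositive of Proposition~\ref{p:general} gives
\[
1\le k_2\,b(n\tau+m)+k_3\,b(n\tau).
\]
This inequality forces $K_m:=k_2\vee k_3>0$ (otherwise the right-hand side is nonpositive), and since $b\ge 0$ one gets $1\le K_m\bigl(b(n\tau+m)+b(n\tau)\bigr)\le 2K_m\bigl(b(n\tau)\vee b(n\tau+m)\bigr)$, so $d_m:=1/(2K_m)>0$ works for every $n\in\bN$; the case $n=0$ is immediate from $b(m)>0$, so after shrinking $d_m$ if necessary we are done.

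I do not expect a genuinely hard step. The two points to be careful about are: that the constants $k_2,k_3$ delivered by Proposition~\ref{p:general} are functions of the laws of $X_1$ and $X_m$ alone, hence of $m$ but not of $n$, which is exactly what makes $d_m$ uniform in $n$; and that the passage from the affine bound to a bound on the maximum uses only $b\ge 0$, so that a possibly negative $k_2$ or $k_3$ does no harm. In effect everything rests on the fact that, $n\tau$ being divisible by $\tau$, a single distribution governs the relevant increment for all $n$.
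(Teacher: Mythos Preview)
Your proposal is correct and follows essentially the same approach as the paper: reduce the increment $X_{n\tau+m}-X_{n\tau}$ to the fixed law of $X_m$ via cyclostationarity and $X_0=0$, then apply Proposition~\ref{p:general} with $a_1=1$ to obtain $1\le k_2\,b(n\tau+m)+k_3\,b(n\tau)$ with constants depending only on $m$. The only cosmetic differences are that the paper takes $G_1$ to be the law of $X_m$ rather than $-X_m$, and extracts $d_m$ by first observing that $k_2,k_3$ may be replaced by positive constants (since $b\ge 0$) and then setting $d_m=1/(k_2+k_3)$, rather than passing through $K_m=k_2\vee k_3$ as you do.
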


\begin{proof}
Similarly as in the proof of Corollary \ref{c:feasible}, $X_{n\tau+m}-X_{n\tau}\dd X_{m}-X_{0}=X_m$, where the second equality holds since the (marginal) self-similarity clearly implies $X_0=0$ almost surely when $b(n)\not\equiv 1$. Applying Proposition \ref{p:general} with $G_1$, $G_2$ and $G_3$ being the distributions of $X_{m}, X_1, -X_1$ respectively, and $a_1=1, a_2=b(n\tau), a_3=b(n\tau+m)$, there exist constants $k_{2,m}$ and $k_{3,m}$, such that $1\leq k_{2,m}b(n\tau)+k_{3,m}b(n\tau+m)$. Moreover, as $b(n\tau)$ and $b(n\tau+m)$ are non-negative, $k_{2,m}$ and $k_{3,m}$ can be chosen to be strictly positive, hence $b(n\tau)\vee b(n\tau+m)\geq \frac{1}{k_{2,m}+k_{3,m}}=:d_m$.
\end{proof}

\begin{proof}[Proof of Theorem \ref{t:classification}]
Define function $f(p):=\log_p b(p)$. Let $A=\{f(p): p\in\cP\}$ be the set of possible values of $f$ for prime numbers. We first prove that $A$ is bounded from above by contradiction. Suppose $\sup(A)=\infty$. Then for any $L>0$, $\cP^{L}:=\{p\in\cP:f(p)\geq L\}$ is not empty. Choose $L$ large enough such that $2\not\in\cP^{L}$. Denote by $p_L$ the smallest element in $\cP^L$. Then for any $n<p_L$, $b(n)<n^L$. Hence
\begin{equation}\label{e:brelation}
b(p_L-1)=b(2)b\left(\frac{p_L-1}{2}\right)<b(2)\left(\frac{p_L-1}{2}\right)^L<\frac{b(2)}{2^L}(p_L)^L\leq \frac{b(2)}{2^L}b(p_L).
\end{equation}
Since $b(2)$ is a fixed constant, $b(p_L)/b(p_L-1)\to\infty$ as $L\to\infty$. Thus, for any $k>1$ and $c\in\mathbb R$, there exists $L$ large enough such that $b(p_L)>kb(p_L-1)+c$, contradicting Corollary \ref{c:feasible}. Hence $A$ must be bounded from above.

Next we show that if $\sup(A)>0$, then this supremum must be achieved by some $p\in\mathcal P$. Suppose there doesn't exist $p\in\cP$ such that $f(p)=H:=\sup (A)>0$. In particular, $\ee:=H-f(2)>0$. For each $n\in\bN$, let $p_n$ be the smallest prime such that $f(p_n)>H-\frac{1}{n}$. Thus the sequence $\{p_n\}$ and $\{f(p_n)\}$ are both non-decreasing, with limits $\infty$ and $H$ respectively. Let $N\in\bN$ be such that $1/N<\ee$ and $H-1/N>0$, then for $n\geq N$, we have by a similar argument as (\ref{e:brelation}),
\begin{align}
\frac{b(p_n)}{b(p_n-1)}\geq \frac{2^{H-1/n}}{b(2)}\geq 2^{\ee-1/N}=:K>1.\label{2K}
\end{align}
This contradicts Corollary \ref{c:feasible}, as for $k\in(1,K)$ and any $c_1\in \mathbb R$, since $b(p_n)\geq p_n^{H-1/N}\to\infty$ as $n\to\infty$, $b(p_n)>kb(p_n-1)+c$ for $n$ large enough.

As a result, if $\sup(A)>0$, there must exist $p\in\mathcal P$, such that $f(p)=\sup (A)$. We show that in this case, $f(q)=\sup (A)$ for any $q\in \mathcal P$. As a result, $b(n)=n^H$ for $n\in\bN$ and $H=\sup (A)>0$. Suppose this is not true, then there exists $q\in \mathcal P$ satisfying $f(q)<f(p)$. For each $r\in \bN$ satisfying $p^r>q$, there exists $m\in\{1,\cdots, q-1\}$, such that $q|p^r-m$. By Corollary \ref{c:feasible}, for any $k>1$, we have
\begin{equation}\label{e:prandprm}
b(p^r)\leq kb(p^r-m)+c_m\leq kb(p^r-m)+c,
\end{equation}
where $c=\vee_{m=1}^{q-1}c_m$. However, note that
$$
b(p^r-m)=b(q)b\left(\frac{p^r-m}{q}\right)\leq b(q)\left(\frac{p^r-m}{q}\right)^H<\frac{b(q)}{q^H}p^{rH}=\frac{b(q)}{q^H}b(p^r).
$$
By the choice of $q$, $\frac{b(q)}{q^H}<1$. Hence (\ref{e:prandprm}) can not hold for $k\in(1,\frac{q^H}{b(q)})$ and $r$ large enough. Thus, we conclude that $f(q)=\sup (A)$, and consequently, $b(n)=n^H$.

It remains to consider the case where $\sup(A)\leq 0$, which is equivalent to $b(n)\leq 1$ for all $n\in\bN$. Suppose there exist two distinct primes $p, q\in\mathcal P$ such that $b(p)<1$ and $b(q)<1$. Let $d_m, m=1, \dots, \tau$ be as given in Corollary \ref{c:smallfeasible} and define $d=\bigwedge_{m=1}^\tau d_m$. Take $i,j$ large enough so that $(b(p))^i<d$ and $(b(q))^j<d$. By B\'{e}zout's lemma, there exist $M,N\in \bN$ such that $0<Mp^i-Nq^j\tau\leq \tau$. Corollary \ref{c:smallfeasible} then implies that $b(Mp^i)\vee b(Nq^j\tau)\geq d$. However, by the choices of $i$ and $j$ we have $b(Mp^i)\leq b(p^i)<d$ and $b(Nq^j\tau)\leq b(q^j)<d$, contradiction. Therefore there exists at most one prime $p$ such that $b(p)<1$. This leads to cases (1) and (2).

Finally, for any completely multiplicative function $b(n)$ satisfying one of the three conditions in Theorem \ref{t:classification}, there exists a non-trivial dt-ss-si process having $b(n)$ as its scaling function, according to Examples \ref{ex:trivial}, \ref{e:iid}, \ref{e:periodic} and Theorem \ref{t:embedding} that we will see.
\end{proof}

 It should be pointed out that a similar result was obtained in \cite{gef} for second-order dt-ss-si processes, \textit{i.e.}, the processes in $L^2$ whose covariance functions satisfy properties related to the self-similarity and the stationarity of the increments of the process. In this sense, Theorem \ref{t:classification} can be regarded as a generalization of that result to the general dt-ss-si processes which are not necessarily in $L^2$.

\begin{example}\label{ex:trivial}
Let $X_n, n\in \bN_0$ be independent and identically distributed random variables, then $\{X_n\}_{n\in\bN_0}$ is a trivial example of a dt-ss-si process with $b(n)=1, n\in\bN$.
\end{example}

We call the dt-ss-si processes with scaling functions satisfying the three cases in Theorem \ref{t:classification} dt-ss-si processes of types I, II, III, respectively. Type III is what people are familiar with from the continuous-time ss-si processes. The following theorem shows that there is indeed a correspondence between the continuous-time ss-si processes and the dt-ss-si processes of type III.

\begin{theorem}\label{t:embedding}
If $\{X(t)\}_{t\geq 0}$ is an ss-si process, then $\{X_n\}_{n\in\bN_0}$ given by $X_n=X(n), n\in\bN_0$ is a dt-ss-si process. Conversely, if $\{X_n\}_{n\in\bN_0}$ is a dt-ss-si process with scaling function $b(n)=n^H$ for $H>0$, then there exists a unique in distribution ss-si process $\{X(t)\}_{t\geq 0}$, such that $\{X(n)\}_{n\in\bN_0}\dd \{X_n\}_{n\in \bN_0}$.
\end{theorem}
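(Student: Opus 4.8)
The first assertion is routine: I would simply restrict the self-similarity of $\{X(t)\}_{t\geq 0}$ to integer dilations $a=n\in\bN$ and integer times $t=m\in\bN_0$ to recover (\ref{ss}) for $\{X_m\}$ with the same constant $b(n)$, and restrict the stationarity of the continuous-time increment process to unit shifts to recover (\ref{si}) for the discrete increment process. The content of the theorem is the converse, for which I would construct $\{X(t)\}$ first on the non-negative rationals by Kolmogorov's extension theorem and then pass to $[0,\infty)$ using completeness of convergence in probability.

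For the rational-time construction, given a finite family $t_1,\dots,t_k\ge 0$ of rationals written over a common denominator $q\in\bN$, say $t_i=m_i/q$, I would declare the law of $(X(t_1),\dots,X(t_k))$ to be that of $q^{-H}(X_{m_1},\dots,X_{m_k})$. The crucial point is consistency: passing from $q$ to a multiple $cq$ replaces each $X_{m_i}$ by $X_{cm_i}$, and by (\ref{ss}) with scaling factor $c$ the vector $(X_{cm_1},\dots,X_{cm_k})$ has the law of $c^H(X_{m_1},\dots,X_{m_k})$, whence $(cq)^{-H}(X_{cm_1},\dots,X_{cm_k})$ and $q^{-H}(X_{m_1},\dots,X_{m_k})$ agree in distribution; since any two common denominators share a common multiple, the family is consistent and Kolmogorov's theorem yields a process $\{X(t)\}_{t\in\bQ_{\geq 0}}$ with $\{X(n)\}_{n\in\bN_0}\dd\{X_n\}_{n\in\bN_0}$. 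Substituting the defining laws into the two required relations --- and using that $X(0)\dd X_0=0$ a.s.\ since $b\not\equiv 1$ --- I would then check directly that this process is self-similar with scaling constant $a^H$ along rational dilations $a>0$ and has stationary increments along rational shifts; both checks reduce, after clearing denominators, to (\ref{ss}) and (\ref{si}) for $\{X_n\}$.

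Next I would extend to real times. For rationals $q>q'$, stationarity of the rational increments and self-similarity give $X(q)-X(q')\dd X(q-q')\dd (q-q')^H X(1)$, which, because $H>0$, tends to $0$ in distribution --- hence in probability --- as $|q-q'|\to 0$; thus for each $t\ge 0$ the net $(X(q))_{\bQ\ni q\to t}$ is Cauchy in probability and I may set $X(t):=\lim_{\bQ\ni q\to t}X(q)$ in probability. The finite-dimensional laws of the resulting process are the in-probability, hence in-distribution, limits of those at rational times, so self-similarity --- now for all real $a>0$, obtained by also approximating $a$ by rationals --- and stationarity of the increments pass to the limit, and $\{X(t)\}_{t\geq 0}$ is a genuine continuous-time ss-si process (in particular stochastically continuous, with $b(a)=a^H$) whose restriction to $\bN_0$ has the law of $\{X_n\}$. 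For uniqueness in distribution I would argue as follows: if $\{X'(t)\}_{t\geq 0}$ is any ss-si process with $\{X'(n)\}_{n\in\bN_0}\dd\{X_n\}_{n\in\bN_0}$, then it is nontrivial (as $X_1$ is nondegenerate) and stochastically continuous, so its scaling function is $a\mapsto a^{H'}$ with $n^{H'}=n^H$, forcing $H'=H$; hence $\{X'(k/q)\}_{k\in\bN_0}\dd q^{-H}\{X'(k)\}_{k\in\bN_0}\dd q^{-H}\{X_k\}_{k\in\bN_0}$ for every $q$, so the finite-dimensional laws of $\{X'(t)\}$ at rationals are those of the process constructed above, and by stochastic continuity the same holds at every real time.

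I expect the main obstacle to be the consistency of the scale-$1/q$ embeddings in the second step --- this is exactly the place where the power form $b(n)=n^H$ (as opposed to a general completely multiplicative $b$) and the full relation (\ref{ss}) are indispensable --- together with the passage from $\bQ$ to $[0,\infty)$, where the hypothesis $H>0$ is essential: it is what forces $X(q)-X(q')\to 0$, and, via $X(0)=0$ a.s., it underpins the whole construction. For $H=0$ the statement is false, consistent with the expectation that type~I processes (e.g.\ a nondegenerate i.i.d.\ sequence) admit no continuous-time ss-si extension.
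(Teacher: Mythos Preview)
Your proof is correct and follows essentially the same route as the paper: define the process on $\bQ^+$ via $(X(m_1/q),\dots,X(m_k/q))\dd q^{-H}(X_{m_1},\dots,X_{m_k})$, check consistency using (\ref{ss}) with $b(n)=n^H$, apply Kolmogorov, verify ss-si on $\bQ^+$, and extend to $[0,\infty)$ by stochastic continuity. The only difference is that where the paper cites \cite{ver} for the stochastic continuity of ss-si processes with $H>0$, you prove it directly from $X(q)-X(q')\dd (q-q')^H X(1)$, and you treat uniqueness more explicitly; both are minor expository choices rather than a different argument.
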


\begin{proof}
An ss-si process observed at discrete-time $\bN_0$ is clearly a dt-ss-si process, hence we focus on the other direction. For that purpose, we will derive the distribution of the ss-si process, $\mathbf Y=\{Y(t)\}_{t\geq 0}$, from any arbitrary dt-ss-si process $\mathbf X=\{X_n\}_{n\in\bN_0}$, so that they have the same distribution on $\bN_0$.

First, it is not difficult to determine the distribution of $\mathbf Y$ on $\mathbb Q^+=\mathbb Q\cap [0,\infty)$ by self-similarity:
\begin{equation}\label{e:rationaldist}
(Y(s_1/t_1),\dots,Y(s_n/t_n))\dd(t_1t_2\dots t_n)^{-H}(X_{s_1t_2\dots t_n},\dots,X_{t_1\dots t_{n-1}s_n})
\end{equation}
for $\{s_i\}_{i=1}^n\subset\bN_0, \{t_i\}_{i=1}^n\subset\bN$. This distribution does not depend on the choice of $s_i$ and $t_i, i=1,\dots, n$. Moreover, since the original finite-dimensional distributions on $\bN_0$ are consistent, the finite-dimensional distributions on $\mathbb Q^+$ are also consistent. Hence, by Kolmogorov's extension theorem, such a process $\{Y(r)\}_{r\in \mathbb Q^+}$ exists. One can check that $\{Y(r)\}_{r\in\mathbb Q^+}$ is ss-si on $\bQ^+$. Indeed, for any $n\in\bN,\ p,s_1, \dots, s_n\in\bN_0$ and $\ q,t_1, \dots t_n\in\bN$,
\begin{align}\label{qss}
\begin{split}
\{Y(s_i/t_i)\}_{i=1, \dots, n}&\dd(\prod_{i=1}^n t_i)^{-H}\{X_{(s_i/t_i)\prod_{i=1}^nt_i}\}_{i=1,\dots, n}\\
&\dd(pq^{n-1}\prod_{i=1}^n t_i)^{-H}\{X_{pq^{n-1}(s_i/t_i)\prod_{i=1}^nt_i}\}_{i=1,\dots, n}\\
&\dd (p/q)^{-H}\{Y(s_ip/t_iq)\}_{i=1,\dots, n}.
\end{split}
\end{align}
Also, by the stationarity of the increments of $\{X_n\}_{n\in\bN_0}$,
\begin{align}\label{qsi}
\begin{split}
&~\{Y(s_i/t_i)\}_{i=1,\dots, n}\\
\dd&~(q^n\prod_{i=1}^n t_i)^{-H}\{X_{q^n(s_i/t_i)\prod_{i=1}^nt_i}\}_{i=1, \dots, n}\\
\dd&~(q^n\prod_{i=1}^n t_i)^{-H}\{X_{(s_iq^nt_i^{-1}+pq^{n-1})\prod_{i=1}^nt_i}-X_{pq^{n-1}\prod_{i=1}^nt_i}\}_{i=1, \dots, n}\\
\dd&~(q^n\prod_{i=1}^n t_i)^{-H}(X_{(s_iq+pt_i)q^{n-1}t_i^{-1}\prod_{i=1}^nt_i}-X_{pq^{n-1}\prod_{i=1}^nt_i}\}_{i=1, \dots, n}\\
\dd&~\{Y(s_i/t_i+p/q)-Y(p/q)\}_{i=1, \dots, n}.
\end{split}
\end{align}

Finally, as it is proved in \cite{ver} that every ss-si process with $H>0$ is stochastically continuous, the distribution on $\mathbb Q^+$ uniquely extends to the distribution on $[0,\infty)$. The self-similarity and the stationarity of the increments are naturally inherited. Thus, we conclude that any dt-ss-si process with $b(n)=n^H, H>0$ determines a unique in distribution ss-si process, which has the same distribution on $\bN_0$ as the dt-ss-si process.
\end{proof}

The following proposition collects several basic properties for dt-ss-si processes of type III. They are direct consequences of Theorem \ref{t:embedding} and the corresponding results in continuous-time, which we cite individually.

\begin{proposition}
Let $\{X_n\}_{n\in\bN_0}$ be a dt-ss-si process of type III with $b(n)=n^H, H>0$, then
\begin{enumerate}
\item \cite{emb} $X_0=0$ almost surely.
\item \cite{obr} For $m,n\in\bN_0$, $m\neq n$, $\bP(X_m=X_n)=\bP(X_m=X_n=0)=\bP(X_i=0, i\in\bN_0)$.
\item \cite{obr} If $H\neq 1$, then $X_1$ has no atom except possibly at zero.
\item \cite{emb} If $0<H<1$ and $\bE(|X_1|)<\infty$, then $\bE(X_n)=0$ for all $n\in\bN_0$.
\item \cite{sam} If $\bE(|X_1|^\gamma)<\infty,$ $0<H<{1/\gamma}$ for $0<\gamma<1$ and $0<H\leq 1$ for $\gamma\geq 1$.
\item \cite{sam} If $\bE(X_1^2)<\infty$, then
\[
\text{Cov}(X_n,X_m)=\frac{1}{2}(n^{2H}+m^{2H}-|n-m|^{2H})\text{Var}(X_1).
\]
\end{enumerate}
\end{proposition}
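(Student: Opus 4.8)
The proof is a uniform transfer of known continuous-time facts across the embedding of Theorem~\ref{t:embedding}. Fix a dt-ss-si process $\{X_n\}_{n\in\bN_0}$ of type III with $b(n)=n^H$, $H>0$. By Theorem~\ref{t:embedding} there is a continuous-time ss-si process $\{X(t)\}_{t\ge0}$, of the same index $H$, with $\{X(n)\}_{n\in\bN_0}\dd\{X_n\}_{n\in\bN_0}$; hence the laws of $(X(t_1),\dots,X(t_k))$ and $(X_{t_1},\dots,X_{t_k})$ agree for every finite tuple of non-negative integers, and in particular $X_1\dd X(1)$. Thus any statement about a single integer-time marginal, about $X_1$, or about a finite integer-time joint distribution of an ss-si process applies verbatim to $\{X_n\}$.

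With this dictionary the items are read off one by one. Item (1) is the triviality of an ss-si process with $H>0$ at the origin, $X(0)=0$ a.s.\ \cite{emb}, evaluated at $t=0$. Item (2) is the continuous-time rigidity result \cite{obr}: for reals $s\neq t$, $\bP(X(s)=X(t))=\bP(X(s)=X(t)=0)=\bP(X(u)=0\ \text{for all }u\ge0)$, which we evaluate at integer $s=m$, $t=n$. Items (3) and (4) are, respectively, the absence of atoms of $X(1)$ away from $0$ when $H\neq1$ \cite{obr}, and $\bE X(n)=0$ when $0<H<1$ and $\bE|X(1)|<\infty$ \cite{emb}. Item (5) is the standard estimate \cite{sam} tying the order of a finite absolute moment of $X(1)$ to the admissible range of $H$. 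Each of these is applied at integer times or to $X_1\dd X(1)$.

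The only point that is not a mechanical reading-off --- and hence the step to watch --- is the identification, inside item (2), of $\bP(X(u)=0\ \text{for all }u\ge0)$ with $\bP(X_i=0\ \text{for all }i\in\bN_0)$: since an ss-si process with $H>0$ is stochastically continuous \cite{ver}, its law on $[0,\infty)$ is determined by its restriction to $\bQ^+$, which by self-similarity --- cf.\ (\ref{e:rationaldist}) --- is determined by the finite-dimensional distributions on $\bN_0$, and monotone limits over finite index sets equate the two probabilities; modulo this bookkeeping there is no genuine obstacle. Finally, item (6) is the integer-time instance of the covariance identity for second-order ss-si processes \cite{sam}, and it may also be obtained without the embedding: by (1), $X_0=0$ a.s., so the stationarity of the increments gives $X_n-X_m\dd X_{|n-m|}$, hence $\text{Var}(X_n-X_m)=|n-m|^{2H}\text{Var}(X_1)$ via $X_n\dd n^HX_1$; expanding $\text{Var}(X_n-X_m)=\text{Var}(X_n)+\text{Var}(X_m)-2\,\text{Cov}(X_n,X_m)$ and inserting $\text{Var}(X_k)=k^{2H}\text{Var}(X_1)$ yields the formula, which is never vacuous since $\bE X_1^2<\infty$ already forces $H\in(0,1]$ by item (5).
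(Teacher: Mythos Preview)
Your proposal is correct and follows the same approach as the paper, which simply states that the properties ``are direct consequences of Theorem~\ref{t:embedding} and the corresponding results in continuous-time'' without further elaboration. You add welcome detail---particularly in flagging and resolving the identification of $\bP(X(u)=0,\ u\ge0)$ with $\bP(X_i=0,\ i\in\bN_0)$ in item~(2), and in giving a direct polarization argument for item~(6)---but the underlying strategy is identical.
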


More interestingly, for the dt-ss-si processes of type II, which do not find their counterparts in continuous-time, we have

\begin{proposition}\label{p:type2property}
Let $\{X_n\}_{n\in\bN_0}$ be a dt-ss-si process of type II with $b(n)=(|n|_p)^H$ for some $H>0$, then:
\begin{enumerate}
\item $X_0=0$ almost surely.
\item $\{X_n\}_{n\in\bN_0}$ is recurrent, in the sense that each $X_n$ is a limit point of $\{X_n\}_{n\in\bN_0}$ almost surely.
\item $X_1\dd-X_1$. Consequently, $\bE(|X_1|)<\infty$ implies $\bE(X_n)=0$ for all $n\in\bN_0$.
\item For $m,n\in \bN_0$, $m\neq n$, $\bP(X_m=X_n)=\bP(X_m=X_n=0)=\bP(X_i=0, i\in\bN_0)$.
\item $X_1$ has no atom except possibly at zero.
\item Let $a=\sup\{x\geq 0:\ \bP(|X_1|<x)=0\}$ and $b=\inf\{x>0:\ \bP(|X_1|>x)=0\}$, then $b\geq (1+2p^{-H})a.$
\item If $\bE(X_1^2)<\infty$, then for any $m,n\in\bN_0$,
$$
\text{Cov}(X_n,X_m)=\frac{1}{2}\left((|n|_p)^{2H}+(|m|_p)^{2H}-(|n-m|_p)^{2H}\right)\text{Var}(X_1).
$$
\end{enumerate}
\end{proposition}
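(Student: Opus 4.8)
The plan is to prove (1)--(7) in this order, using throughout the two features that distinguish type II: $b(p)=p^{-H}<1$ while $b(q)=1$ for every prime $q\neq p$, so that $b(p^{k})=p^{-kH}\to 0$ and $b(N)=1$ whenever $p\nmid N$.

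For (1), marginal self-similarity at $n=p$ gives $X_{0}\dd b(p)X_{0}$, and iterating, $X_{0}\dd p^{-kH}X_{0}$ for every $k$; letting $k\to\infty$ forces $X_{0}\dd 0$. For (2), fix $n$; stationary increments give $X_{n+p^{k}}-X_{n}\dd X_{p^{k}}\dd p^{-kH}X_{1}$, which converges to $0$ in distribution, hence (the limit being constant) in probability, so some subsequence converges to $0$ almost surely; thus $X_{n}$ is a.s.\ a limit point of $(X_{m})_{m\in\bN_{0}}$, and since this holds for each $n$ and $\bN_{0}$ is countable, a.s.\ every $X_{n}$ is such a limit point. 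For (3), note that the pair $(X_{p^{k}-1},X_{p^{k}})$ is tight: its first coordinate is $\dd X_{1}$ since $b(p^{k}-1)=1$, and its second is $\dd p^{-kH}X_{1}$, hence stochastically dominated in modulus by $|X_{1}|$. Choose a subsequence along which the pair converges weakly; the limit is necessarily of the form $(V,0)$ with $V\dd X_{1}$ (since $X_{p^{k}}\to 0$ in distribution). By stationarity of the increments $X_{p^{k}}-X_{p^{k}-1}\dd X_{1}$ for all $k$, and passing to the limit gives $-V\dd X_{1}$, so $X_{1}\dd -X_{1}$. The consequence $\bE X_{n}=b(n)\bE X_{1}=0$ is then immediate from $X_{n}\dd b(n)X_{1}$ once $\bE|X_{1}|<\infty$.

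For (4) and (5): by stationary increments $X_{m}-X_{n}\dd X_{|m-n|}$ and by marginal self-similarity $X_{|m-n|}\dd b(|m-n|)X_{1}$ with $b(|m-n|)>0$, so $\bP(X_{m}=X_{n})=\bP(X_{|m-n|}=0)=\bP(X_{1}=0)$. If the constant $a$ of item (6) is positive then $\bP(X_{1}=0)=0$ and all three probabilities in (4) vanish; otherwise one must still show $\bP(X_{i}=0,\ i\in\bN_{0})=\bP(X_{1}=0)$ and that $\{X_{m}=X_{n}\neq 0\}$ is null. I would obtain this by adapting the O'Brien--Vervaat argument behind the continuous-time results cited in \cite{obr}: the decomposition $X_{p^{k}}=\sum_{j=0}^{p^{k}-1}(X_{j+1}-X_{j})$ realizes $X_{p^{k}}\dd p^{-kH}X_{1}$ as a sum of $p^{k}$ stationary increments each $\dd X_{1}$, and the contraction $b(p^{k})\to 0$ plays exactly the role of the continuous rescaling $a\downarrow 0$ there, ruling out a nonzero atom of $X_{1}$ (item 5) and pinning down the coincidence probabilities (item 4). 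I expect this adaptation --- in particular handling the dependence among the increments --- to be one of the two main obstacles.

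For (6), if $a=0$ or $b=\infty$ there is nothing to prove, so assume $0<a\le b<\infty$; then by (5) (with $a>0$ also excluding an atom at $0$) the law of $X_{1}$ is atomless, and by (3) it is symmetric, so a.s.\ $|X_{1}|\in[a,b]$, $|X_{p}|\in[p^{-H}a,p^{-H}b]$, and $|X_{p+1}-X_{p}|\in[a,b]$. The key identity is $X_{2p+1}=X_{p}+(X_{p+1}-X_{p})+(X_{2p+1}-X_{p+1})$, whose three summands are $\dd p^{-H}X_{1}$, $\dd X_{1}$, $\dd p^{-H}X_{1}$ (using $2p+1-(p+1)=p$), while $X_{2p+1}\dd X_{1}$ since $p\nmid 2p+1$, so $|X_{2p+1}|\le b$ a.s. Rearranged with the last term moved across (and using $-X_{1}\dd X_{1}$), this is a joint mixability relation among the laws $p^{-H}X_{1},X_{1},p^{-H}X_{1},-X_{1}$; applying the quantile/mixability inequalities of \cite{puc} underlying Proposition \ref{p:general}, with averaging windows chosen according to $a$ and $b$ and the constants tracked explicitly, should yield $b\ge(1+2p^{-H})a$. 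Concretely: if $b<(1+2p^{-H})a$ the three summands of the identity can never be simultaneously of one sign (else $|X_{2p+1}|\ge p^{-H}a+a+p^{-H}a=(1+2p^{-H})a>b$), and the remaining work is to show this sign restriction is incompatible with the prescribed marginal laws and magnitude bounds; I expect this to be the second main obstacle. Item (7), by contrast, is routine: from (3) and $\bE X_{1}^{2}<\infty$ we get $\bE X_{1}=0$, hence $\bE X_{n}=0$ and $\text{Var}(X_{n})=b(n)^{2}\text{Var}(X_{1})=(|n|_{p})^{2H}\text{Var}(X_{1})$ via $X_{n}\dd b(n)X_{1}$; likewise $\bE(X_{n}-X_{m})^{2}=(|n-m|_{p})^{2H}\text{Var}(X_{1})$ via $X_{n}-X_{m}\dd X_{|n-m|}$; and the polarization identity $\text{Cov}(X_{n},X_{m})=\tfrac12\big(\bE X_{n}^{2}+\bE X_{m}^{2}-\bE(X_{n}-X_{m})^{2}\big)$ gives the formula, with $|0|_{p}=0$ and $X_{0}=0$ covering the cases $n=0$, $m=0$, $n=m$.
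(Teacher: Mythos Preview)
Your treatment of (1), (2), (3) and (7) is essentially the paper's, and your plan for (4) and (5) is on the right track: the paper also reduces to an O'Brien--Vervaat adaptation, and for (5) it gives a short self-contained argument using (4) together with $X_{p^{k}}\to 0$ in probability (split $\{X_{1}=x\}$ according to whether $|X_{p^{k}+1}-X_{1}|$ is $0$, small, or large). So your ``first obstacle'' is real but mild.

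The genuine gap is in (6). Your decomposition
\[
X_{2p+1}=X_{p}+(X_{p+1}-X_{p})+(X_{2p+1}-X_{p+1})
\]
pins the small summands at the single scale $c=p^{-H}$, and after the easy ``not all one sign'' observation you are left with a four-variable sign/magnitude puzzle that you yourself flag as unresolved; it is not clear the mixability inequalities of \cite{puc} give the sharp constant $1+2p^{-H}$ from this configuration. The paper avoids this entirely by exploiting the freedom in the \emph{power} of $p$. Write $c=p^{-H}$ and assume $b<(1+2c)a$; a one-line computation shows $\log_{c}((b-a)/b)-\log_{c}(2a/b)>1$, so there exists $n\in\bN$ with
\[
b-a<c^{n}b<2a.
\]
Now use the two-term identity $X_{p^{n}}=(X_{p^{n}}-X_{p^{n}-1})+X_{p^{n}-1}$, i.e.\ $Y=X+Z$ with $Y\dd c^{n}X_{1}$ and $X\dd Z\dd X_{1}$. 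From $c^{n}b<2a$ one gets $\bP(X>0,Z>0)=\bP(X<0,Z<0)=0$ immediately (same sign would force $|Y|\ge 2a>c^{n}b$). But opposite signs give $|Y|=|X+Z|\le b-a$, hence $\bP(|X_{1}|>c^{-n}(b-a))=0$ with $c^{-n}(b-a)<b$, contradicting the definition of $b$. This two-step argument is elementary and does not need Proposition \ref{p:general} at all; the key idea you are missing is that one should choose the scale $c^{n}$, not fix it at $c$.
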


\begin{proof}
(1) and (2) are trivial from definition. For (3), note that by the stationarity of the increments, for any $n\in\bN$,
\[
X_{p^n+1}-X_{p^n}\dd X_1\dd X_{p^n}-X_{p^n-1}.
\]
Since $X_{p^n}\dd p^{-nH}X_1, H>0$, $X_{p^n}\to 0$ in distribution and hence in probability as $n\to\infty$. We thus have
\[
X_1\dddd X_{p^n+1}\dd X_{p^n-1}\ddd -X_1.
\]

(4) We have for $m,n\in\bN_0$ and any $M>0$,
\begin{align*}
\bP(X_m=X_n\neq 0)&\leq \bP(0<|X_m|< M)+\bP(|X_n|\geq M)\\
&=\bP(0<|X_m|<M)+\bP(|X_1|\geq Mb(n)^{-1})
\end{align*}
thus equation (14) in \cite{obr} can be replaced by
\[
\lim_{n\to\infty}\bP(X_m=X_{p^n}\neq 0)=0.
\]
The rest of the proof follows in the same way as in the proof of Lemma 3 of \cite{obr}.\\

(5) Suppose $\bP(X_1=x)=p>0$ for some $x\neq 0$. Choose $\ee>0$ such that $\bP(X_1\in (x-\ee,x+\ee)\setminus \{x\})<p/2$. Choose $n$ large enough such that $\bP(|X_{p^n}|\geq\ee)<p/2$, then
\begin{align*}
p=&~\bP(X_1=x)\\
=&~\bP(X_1=x,\ |X_{p^n+1}-X_1|\geq \ee)+\bP(X_1=x,\ |X_{p^n+1}-X_1|=0)\\
&+\bP(X_1=x,\ 0<|X_{p^n+1}-X_1|< \ee)\\
\leq&~ \bP(|X_{p^n+1}-X_1|\geq \ee)+\bP(X_{p^n+1}=X_1\neq 0)\\
&+\bP(X_1=x,\ X_{p^n+1}\in (x-\ee,x+\ee)\setminus \{x\}).
\end{align*}
The second term in the last expression is 0 by property (4), hence
\begin{align*}
p&\leq \bP(|X_{p^n+1}-X_1|\geq \ee)+\bP(X_{p^n+1}\in (x-\ee,x+\ee)\setminus \{x\})\\
&= \bP(|X_{p^n}|\geq\ee)+\bP(X_1\in (x-\ee,x+\ee)\setminus \{x\})\\
&<p/2+p/2=p,
\end{align*}
which gives a contradiction. Hence $X_1$ does not have any atom except for 0.\\

(6) Let $c:=p^{-H}$. Suppose $b<(1+2c)a$, then $\log_c(\frac{b-a}{b})-\log_c(\frac{2a}{b})>1$. So there exists $n\in\bN$ satisfying $b-a<c^nb<2a$. Note that there exist $X,Y,Z$ such that $Y/c^n\dd X\dd Z\dd X_1$, $Y=Z+X$. We have
\[
\bP(X>0, Z>0)=\bP(X>a,Z>a)\leq \bP(Y>2a)\leq\bP(c^{-n}Y>b)=0.
\]
Symmetrically, $\bP(X<0, Z<0)=0$.
Meanwhile,
\[
\bP(|X|>c^{-n}(b-a))=\bP(|Y|>b-a)=\bP(|X+Z|>b-a).
\]
However, by the definition of $a$ and $b$, $|X+Z|>b-a$ implies that almost surely, $X>0, Z>0$ or $X<0, Z<0$. As a result, $\bP(|X|>c^{-n}(b-a))=0$, contradicting the choice of $b$ since $c^{-n}(b-a)<b$.\\

(7) is trivial by polarization.
\end{proof}

\section{Examples of dt-ss-si processes of type II}
As shown in the previous section, the dt-ss-si processes can be classified into three types. Type I is degenerate and type III has continuous-time counterparts. Type II, for which $b(n)=(|n|_p)^H$, only exists in the discrete-time setting and is, therefore, of special interest. Sections 4 and 5 are mainly dedicated to the study of this type. In this section, we give two classes of examples for dt-ss-si processes of type II.

\begin{example}\label{e:iid}
Let $p\in\cP$ and $0<b<1$. Let $\{Y_n^k\}_{k\in\bN_0,\ 0\leq n\leq p^{k+1}-1}$ be a sequence of independent and identically distributed random variables having any non-degenerate distribution such that $\bE(|Y_0^0-Y_0^1|^q)<\infty$ for some $q>0$. Sufficient conditions for this can be $Y_0^0\in L^1(\Omega, \mathcal F, \mathbb P)$, or $Y_0^0$ is $\alpha$-stable with $0<\alpha\leq 2$. Extend the sequence periodically to $\{Y_n^k\}_{k,n\in\bN_0}$ by defining $Y_\ell^k=Y_n^k$ for $\ell\equiv n$ (mod $p^{k+1}$). Define
\begin{align}
X_n=\sum_{k=0}^\infty b^k(Y_n^k-Y_0^k), \quad n\in\bN_0.    \label{xn11}
\end{align}
It is easy to see that the above summation converges almost surely for any $n\in\bN_0$, thus $\{X_n\}_{n\in\bN_0}$ is well-defined. Indeed,
\[
\sum_{k=0}^\infty \bE(|b^k(Y_n^k-Y_0^k)|^q)=\bE(|Y_0^0-Y_0^1|^q)\sum_{k=0}^\infty b^{qk}<\infty.
\]
$\{X_n\}_{n\in\bN_0}$ is a dt-ss-si process of type II. We show this in the following proposition.
\end{example}

\begin{proposition}
The process $\{X_n\}_{n\in\bN_0}$ given in (\ref{xn11}) is dt-ss-si with scaling function $b(n)=(|n|_p)^H$, where $H=-\log_p(b)$.
\end{proposition}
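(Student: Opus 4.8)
The plan is to realize the whole process $\{X_n\}_{n\in\bN_0}$ as the image of the array $\mathbf Y=\{Y_n^k\}$ under a single fixed measurable map $\Phi$, namely $\Phi(\mathbf y)=\big(\sum_{k=0}^\infty b^k(y_n^k-y_0^k)\big)_{n\in\bN_0}$, with $y_n^k$ on the right read off from the periodic extension. Since each $X_n$ is the almost sure limit of the manifestly measurable partial sums $\sum_{k=0}^N b^k(Y_n^k-Y_0^k)$, the map $\Phi$ is measurable, and the one fact I will use repeatedly is the push-forward principle: if an array $\mathbf Z$ has $\mathbf Z\dd\mathbf Y$, then $\{\Phi(\mathbf Z)_n\}_n\dd\{X_n\}_n$. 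The source of every such identity is that $\mathbf Y$ consists of i.i.d.\ variables, so reindexing, independently across levels $k$, the level-$k$ variables by any injection of $\{0,\dots,p^{k+1}-1\}$ into $\bN_0$ produces a sub-array of i.i.d.\ variables with the same joint law as $\mathbf Y$.

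For self-similarity, fix $n\in\bN$ and write $n=p^jn'$ with $p\nmid n'$; since $H=-\log_p b$ we have $(|n|_p)^H=p^{-jH}=b^j$. In $X_{nm}=\sum_{k\ge0}b^k(Y^k_{nm}-Y^k_0)$ every term with $k<j$ vanishes because $p^{k+1}\mid n$ forces $p^{k+1}\mid nm$, while for $k=j+i$ one has $nm\equiv p^j\big(n'm \bmod p^{i+1}\big)\pmod{p^{k+1}}$, so that
\[
X_{nm}=b^j\sum_{i=0}^\infty b^i\big(Z^i_m-Z^i_0\big),\qquad Z^i_\ell:=Y^{j+i}_{p^j(n'\ell \bmod p^{i+1})}.
\]
As $\gcd(n',p)=1$, the map $\ell\mapsto n'\ell \bmod p^{i+1}$ is a bijection of $\{0,\dots,p^{i+1}-1\}$, hence $\ell\mapsto p^j(n'\ell\bmod p^{i+1})$ is an injection into the index set of the level-$(j+i)$ variables; therefore $\mathbf Z\dd\mathbf Y$ and the display exhibits $\{X_{nm}\}_m$ as $b^j$ times the process $\Phi(\mathbf Z)$, which is equal in law to $\{X_m\}_m$. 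Thus $\{X_{nm}\}_m\dd\{b^jX_m\}_m=\{(|n|_p)^HX_m\}_m$, which is (\ref{ss}) with $b(n)=(|n|_p)^H$.

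For stationary increments, fix $k\in\bN$. In differences the $Y^j_0$ terms cancel, so with $W^j_n:=Y^j_{n+k}$ (read from the periodic extension) we get $X_{n+k}-X_k=\sum_j b^j(W^j_n-W^j_0)=\Phi(\mathbf W)_n$. Cyclic shift by $k$ is a permutation of $\{0,\dots,p^{j+1}-1\}$, so $\mathbf W$ arises from $\mathbf Y$ by within-level permutations and $\mathbf W\dd\mathbf Y$; the push-forward principle then gives $\{X_{n+k}-X_k\}_n\dd\{X_n\}_n$, and taking first differences on both sides yields $\{X_{m+k+1}-X_{m+k}\}_m\dd\{X_{m+1}-X_m\}_m$, i.e.\ (\ref{si}). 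Together with the previous paragraph this shows $\{X_n\}$ is dt-ss-si with scaling function $(|n|_p)^H$, hence of type II.

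I do not expect a deep obstacle: the substance is bookkeeping. The point demanding the most care is keeping the periodic extension consistent, so that the displayed identity for $X_{nm}$ (and its increment analogue) is valid for \emph{every} $m\in\bN_0$ rather than only for $m$ in one period, and stating once, cleanly, why a measurable functional may be transported across an equality in distribution of the underlying i.i.d.\ arrays. The number-theoretic inputs --- that $p^{k+1}\mid n$ implies $p^{k+1}\mid nm$, and that multiplication by a unit is a bijection of $\bZ/p^{i+1}\bZ$ --- are immediate.
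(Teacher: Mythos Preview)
Your proof is correct and rests on exactly the same two observations as the paper's: within each level $k$ the i.i.d.\ law of $\{Y^k_n\}_{0\le n<p^{k+1}}$ is preserved under any injection (in particular under multiplication by a unit of $\bZ/p^{k+1}\bZ$ and under cyclic shifts), and scaling by $p$ shifts the level index by one after the $k<j$ terms vanish. The only organisational difference is that you verify the scaling relation for a general $n=p^jn'$ in one stroke via the array $\mathbf Z$, whereas the paper checks the prime cases $q\neq p$ and $p$ separately and then invokes complete multiplicativity; your push-forward framing with $\Phi$ simply makes explicit the principle the paper uses implicitly when passing from equalities in law of the $Y$-arrays to equalities in law of the $X$-process.
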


\begin{proof}
We first show that $\{X_{qn}\}_{n\in\bN_0}\dd \{X_n\}_{n\in\bN_0}$ for $q\in\mathcal P, q\neq p$. Note that for any fixed $k\in \bN_0$, by the periodicity of $Y_n^k$, $\{Y^k_{qn}\}_{0\leq n\leq p^{k+1}-1}$ is just a permutation of $\{Y^k_{n}\}_{0\leq n\leq p^{k+1}-1}$, hence also a sequence of independent and identically distributed random variables. Moreover, both $\{Y^k_n\}_{n\in\bN_0}$ and $\{Y^k_{qn}\}_{n\in\bN_0}$ have period $p^{k+1}$ with respect to $n$. Thus,
\[
\{Y^k_{qn}\}_{n\in\bN_0}\dd\{Y^k_{n}\}_{n\in\bN_0},
\]
which clearly implies
\[
\{Y^k_{qn}-Y_0^k\}_{n\in\bN_0}\dd\{Y^k_{n}-Y_0^k\}_{n\in\bN_0}.
\]
Since the sequences with different values of $k$ are independent,
\[
\left\{\sum_{k=0}^\infty b^k(Y^k_{qn}-Y_0^k)\right\}_{n\in\bN_0}\dd\left\{\sum_{k=0}^\infty b^k(Y^k_{n}-Y_0^k)\right\}_{n\in\bN_0},
\]
\textit{i.e.}, $\{X_{qn}\}_{n\in\bN_0}\dd \{X_n\}_{n\in\bN_0}$.

To show $\{X_{pn}\}_{n\in\bN_0}\dd \{bX_n\}_{n\in\bN_0}$, note that by independence,
\[
\{Y_n^k\}_{0\leq n\leq p^{k+1}-1}\dd \{Y_{pn}^{k+1}\}_{0\leq n\leq p^{k+1}-1}.
\]
Since both $\{Y_n^k\}_{n\in\bN_0}$ and $\{Y_{pn}^{k+1}\}_{n\in\bN_0}$ have the same period $p^{k+1}$,
\[
\{Y_n^k\}_{n\in\bN_0}\dd \{Y_{pn}^{k+1}\}_{n\in\bN_0},
\]
hence
\[
\{Y_n^k-Y_0^k\}_{n\in\bN_0}\dd \{Y_{pn}^{k+1}-Y_0^{k+1}\}_{n\in\bN_0}.
\]
As the components with different values of $k$ are independent, we have
\begin{align*}
\left\{\sum_{k=0}^\infty b^k(Y^k_{n}-Y_0^k)\right\}_{n\in\bN_0}&\dd\left\{\sum_{k=0}^\infty b^k(Y^{k+1}_{np}-Y_0^{k+1})\right\}_{n\in\bN_0}\\
&= \left\{b^{-1}\sum_{k=0}^\infty b^{k+1}(Y^{k+1}_{np}-Y_0^{k+1})\right\}_{n\in\bN_0}\\
&= \left\{b^{-1}\sum_{k=0}^\infty b^k(Y^{k}_{np}-Y_0^{k})\right\}_{n\in\bN_0},
\end{align*}
where the last equality follows from $Y^0_{np}=Y^0_{0}, n\in\bN_0$.
Therefore, according to (\ref{xn11}), we have $\{X_{pn}\}_{n\in\bN_0}\dd \{bX_n\}_{n\in\bN_0}$.

Finally, to show the stationarity of the increments, note that the process $\{Y^k_n\}_{n\in\bN_0}$ is stationary, so $\{Y^k_n-Y^k_0\}_{n\in\bN_0}$ has stationary increments for all $k\in\bN_0$. Again by the independence of the components with different values of $k$, $\{X_n\}_{n\in\bN_0}$ has stationary increments.
\end{proof}

\begin{remark}
When $Y_0^0$ follows a Gaussian distribution, $\{X_n\}_{n\in\bN_0}$ is a Gaussian process with covariance function specified in Proposition \ref{p:type2property}, property (7).
\end{remark}

\begin{remark}
Example \ref{e:iid} answers several questions for which the continuous-time counterparts are still open. For instance, as mentioned in \cite{obr} and \cite{ver}, it is not clear whether for a continuous-time ss-si processes with $0<H<1$, denoted by $\{X(t)\}_{t\geq 0}$, the support of $X(1)$ must be unbounded. By Example \ref{e:iid}, we know this is not true for dt-ss-si processes of type II when the support of $Y_0^0$ is bounded.

Another open problem raised in \cite{obr} asks whether the distribution of $X(1)$ must be absolutely continuous on $\mathbb R\setminus \{0\}$ when $H>0$ and $H\neq 1$. The answer is also negative for our dt-ss-si process of type II. It is easy to see that $X_n$ can be expressed in the form
\[
X_n=\sum_{k=0}^\infty b^kX_n^{(k)}
\]
where $\{X_n^{(k)}\}_{k\in\bN_0}$ is a sequence of independent and identically distributed random variables. When the support of $Y_0^0$ is finite, this corresponds to a generalization of the Bernoulli convolution in \cite{per}. When $b$ is a reciprocal of a Pisot number in a certain interval, the distribution of $X_n$ will be singular. This is also the case when $b$ is close enough to $0$, where the support of $X_n$ is a Cantor-type set, again provided that the support of $Y^0_0$ is finite.
\end{remark}

\begin{example}\label{e:periodic}
Fix $p\in\cP,\ b\in(0,1)$. Let $\bu$ be a $p$-dimensional random vector whose entries $u_0,\dots,u_{p-1}$ sum up to 0. For $k\in\bN_0$, let $\{V_k(n)\}_{n\in\bN_0}$ be the stochastic process given by

\begin{align*}
V_k(n):=\begin{cases}
0 & \quad \text{if }p^k\nmid n\\
u_s & \quad \text{if }n\equiv sp^k\ (\text{mod }p^{k+1}).
\end{cases}%\label{fkn}
\end{align*}
Let $\{U^j_k\}_{k\in\bN_0,1\leq j\leq p^{k+1}-1}$ be a sequence of independent random variables such that $U^j_k$ is uniformly distributed on $\{0,1,\dots,p^{k+1}-1\}$, and $\{U^j_k\}_{k\in\bN_0,1\leq j\leq p^{k+1}-1}$ is independent of $\bu$. For each $k\in\bN_0,\ 1\leq j\leq p^{k+1}-1$, define
$$
Y_k^j(n):=V_k(n+U^j_k), \quad n\in\bN_0.\label{ykn}
$$
It is easy to see that $\{Y_k^j(n)\}_{n\in\bN_0}$ is stationary, has period $p^{k+1}$, and the sum in each period is zero since the sum of the entries of $\bu$ is zero. Moreover, these stationary sequences are independent conditional on $\bu$ by the independence of $\{U_k^j\}_{k\in\bN_0, 1\leq j\leq p^{k+1}-1}$. For $j=1, 2, \dots, p^{k+1}-1$, define
$$
Y_{k,j}(n)=\sum_{m=j(n-1)+1}^{jn}Y_k^j(m), \quad n\in\bN,\label{ykjn}
$$
which has period $p^{k+1}$ and the sum in each period is again zero. Let $\{J_k\}_{k\in\bN_0}$ be another sequence of independent uniform random variables on $\{0,1,\dots,p^{k+1}-1\}$, independent of $\bu$ and $\{U_k^j\}_{k\in\bN_0,1\leq j\leq p^{k+1}-1}$. Finally, define the random sequence
\begin{align}
X_n:=\sum_{k=0}^\infty b^k\sum_{\ell=1}^n\sum_{j=1}^{p^{k+1}-1}\bone_{\{J_k=j\}}Y_{k,j}(\ell)=\sum_{k=0}^\infty b^k\sum_{j=1}^{p^{k+1}-1}\bone_{\{J_k=j\}}\sum_{m=1}^{jn}Y_k^j(m)\label{xn}
\end{align}
for $n\in\bN_0$, which converges almost surely since $0<b<1$. Note that if $\bu$ is bounded, then $X_n$ is bounded uniformly in $n$.
\end{example}

\begin{proposition}\label{p:ex2}
The process $\{X_n\}_{n\in\bN_0}$ given in (\ref{xn}) is dt-ss-si with scaling function $b(n)=(|n|_p)^H$, where $H=-\log_p(b)$.
\end{proposition}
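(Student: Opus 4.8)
The plan is to verify, conditionally on $\bu$, the scaling relation $\{X_{qn}\}_{n}\dd\{X_n\}_{n}$ for each prime $q\neq p$, the scaling relation $\{X_{pn}\}_{n}\dd\{bX_n\}_{n}$, and the stationarity of the increments; since all the $U_k^j$ and $J_k$ are independent of $\bu$, integrating over $\bu$ then yields the unconditional statements, and complete multiplicativity of $b$ finishes the proof. The first task is to put the level-$k$ summand of $X_n$ into a workable form. From (\ref{xn}) this summand equals $\sum_{m=1}^{J_kn}Y_k^{J_k}(m)=\sum_{m=1}^{J_kn}V_k(m+U_k^{J_k})$, an empty sum (hence $0$) when $J_k=0$. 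Since $V_k$ has period $p^{k+1}$, vanishes off $p^k\bZ$, and satisfies $V_k(p^kt)=u_{t\bmod p}$, writing $U_k^{J_k}=p^k\beta_k+\rho_k$ with $\beta_k\in\{0,\dots,p-1\}$ and $\rho_k\in\{0,\dots,p^k-1\}$, one computes
\[
C_k(n):=\sum_{m=1}^{J_kn}V_k(m+U_k^{J_k})=\sum_{t=1}^{\lfloor(J_kn+\rho_k)/p^k\rfloor}u_{(t+\beta_k)\bmod p}=R_{\beta_k}\!\left(\left\lfloor\tfrac{J_kn+\rho_k}{p^k}\right\rfloor\right),
\]
where $R_\beta(s):=\sum_{t=1}^{s}u_{(t+\beta)\bmod p}$ is periodic in $s$ with period $p$ (because $\sum_iu_i=0$). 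Conditionally on $\bu$, the triples $(J_k,\beta_k,\rho_k)$, $k\ge0$, are mutually independent, with $J_k,\rho_k,\beta_k$ uniform on $\{0,\dots,p^{k+1}-1\}$, $\{0,\dots,p^k-1\}$, $\{0,\dots,p-1\}$ respectively and $\beta_k$ independent of $(J_k,\rho_k)$; moreover $|C_k(n)|\le\sum_{i=0}^{p-1}|u_i|$, so $X_n=\sum_kb^kC_k(n)$ converges absolutely given $\bu$ and one may argue level by level.

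For a prime $q\neq p$, the level-$k$ summand of $X_{qn}$ is $R_{\beta_k}(\lfloor(qJ_kn+\rho_k)/p^k\rfloor)$, which, $R_{\beta_k}$ being $p$-periodic, depends on $n$ only through $(qJ_k\bmod p^{k+1})\,n+\rho_k$ modulo $p^{k+1}$. Because $\gcd(q,p^{k+1})=1$, the map $j\mapsto qj\bmod p^{k+1}$ permutes $\bZ/p^{k+1}\bZ$, so $qJ_k\bmod p^{k+1}$ is again uniform on $\{0,\dots,p^{k+1}-1\}$ and still independent of $(\beta_k,\rho_k)$; hence the level-$k$ summand of $X_{qn}$ has the same conditional law as $C_k(\cdot)$, and jointly so over $k$ by independence. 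Summing $b^k$ and integrating over $\bu$ gives $\{X_{qn}\}_{n}\dd\{X_n\}_{n}$. Stationarity of the increments is proved in the same spirit: the increment at $n$ is $\sum_kb^kD_k(n)$ with $D_k(n):=\sum_{m=J_kn+1}^{J_k(n+1)}V_k(m+U_k^{J_k})$, and shifting the index by $h$ replaces, within each level, $U_k^{J_k}$ by $U_k^{J_k}+hJ_k\bmod p^{k+1}$, which is again uniform and, jointly with $J_k$, has the same law as $(J_k,U_k^{J_k})$; independence of the levels given $\bu$ then gives shift-invariance of the whole increment process conditionally on $\bu$, hence unconditionally.

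The scaling relation with $p$ is the heart of the argument. First, the level-$0$ summand of $X_{pn}$ is $\sum_{m=1}^{J_0pn}V_0(m+U_0^{J_0})=0$, since $V_0$ has period $p$ with zero sum over a period and $p\mid J_0pn$; thus $X_{pn}=\sum_{k\ge1}b^kR_{\beta_k}(\lfloor(pJ_kn+\rho_k)/p^k\rfloor)$, whereas $bX_n=\sum_{k\ge1}b^kC_{k-1}(n)=\sum_{k\ge1}b^kR_{\beta_{k-1}}(\lfloor(J_{k-1}n+\rho_{k-1})/p^{k-1}\rfloor)$. As the levels are independent on both sides (given $\bu$), and $\beta_k$ (resp.\ $\beta_{k-1}$) is uniform and independent of everything else, it suffices to prove, for each $k\ge1$, the equality in law of processes
\[
\Big\{\lfloor(pJ_kn+\rho_k)/p^k\rfloor\bmod p\Big\}_{n\in\bN_0}\ \dd\ \Big\{\lfloor(J_{k-1}n+\rho_{k-1})/p^{k-1}\rfloor\bmod p\Big\}_{n\in\bN_0}.
\]
I would prove this by peeling off one base-$p$ digit. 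The left-hand quantity is the $p^k$-digit of $pJ_kn+\rho_k$, unchanged if $J_k$ is replaced by the uniform variable $J_k':=J_k\bmod p^k$; writing $J_k'n=p^{k-1}q+r$ with $r:=J_k'n\bmod p^{k-1}$, and setting $\rho_k':=\lfloor\rho_k/p\rfloor$ (uniform on $\{0,\dots,p^{k-1}-1\}$ and independent of $J_k'$), an elementary computation gives $\lfloor(pJ_k'n+\rho_k)/p^k\rfloor=q+\bone_{\{\rho_k'\ge p^{k-1}-r\}}$, so the left-hand process equals
\[
\Big\{\big(\lfloor(J_k'n\bmod p^k)/p^{k-1}\rfloor+\bone_{\{\rho_k'\ge p^{k-1}-r\}}\big)\bmod p\Big\}_{n}.
\]
The identical computation applied to the right-hand side, with $J_{k-1}$ in the role of $J_k'$ and $\rho_{k-1}$ in that of $\rho_k'$, produces exactly the same measurable functional of the pair (uniform on $\{0,\dots,p^k-1\}$, independent uniform on $\{0,\dots,p^{k-1}-1\}$); as $J_k'\dd J_{k-1}$ and $\rho_k'\dd\rho_{k-1}$, the two processes agree in law. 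Assembling over $k\ge1$ and integrating over $\bu$ yields $\{X_{pn}\}_{n}\dd\{bX_n\}_{n}$.

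Since $b$ is completely multiplicative, the scaling relations for the primes extend to every $n\in\bN$ by composing the coordinate maps $(x_m)_m\mapsto(x_{nm})_m$, so that $b(n)=b(p)^{v_p(n)}=b^{v_p(n)}=(|n|_p)^{H}$ with $H=-\log_p b>0$; together with the stationarity of the increments this shows $\{X_n\}_{n\in\bN_0}$ is dt-ss-si of type II. The one genuinely nontrivial step is the base-$p$ digit-peeling identity of the third paragraph: it is precisely there that $p$ behaves differently from the other primes — and hence the $p$-adic norm enters the scaling function — while everything else is bookkeeping with the conditionally independent levels.
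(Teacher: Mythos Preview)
Your proof is correct and takes a genuinely different route from the paper. The paper works directly with the building blocks $Y_{k,j}$: for $q\neq p$ it uses the identity $\sum_{\ell=q(n-1)+1}^{qn}Y_{k,j}(\ell)=Y_{k,[qj]}(n)$ together with the fact that $j\mapsto[qj]$ permutes $\{1,\dots,p^{k+1}-1\}$, and for the $p$-scaling it relies on the structural nesting relation $\sum_{m=i+1}^{i+p}V_k(m)=V_{k-1}(\lfloor i/p\rfloor+1)$ to reduce level $k$ to level $k-1$, observing that $\lfloor U_k^j/p\rfloor$ is uniform on $\{0,\dots,p^k-1\}$. You instead derive the closed form $C_k(n)=R_{\beta_k}(\lfloor(J_kn+\rho_k)/p^k\rfloor)$ and carry out all three verifications by algebraic manipulation of this formula and base-$p$ digit arithmetic. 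The paper's argument is more conceptual and makes the self-similar ``nesting'' of the $V_k$'s transparent; your argument is more computational but has the advantage of reducing everything to a single explicit functional of independent uniforms, so that each distributional identity becomes a statement about how that functional behaves under a change of variables. One small remark: your digit-peeling step can be shortened, since $\lfloor(pJ_k'n+\rho_k)/p^k\rfloor=\lfloor(J_k'n+\lfloor\rho_k/p\rfloor)/p^{k-1}\rfloor$ already puts the left-hand side in exactly the form of the right-hand side with $(J_k',\rho_k')$ in place of $(J_{k-1},\rho_{k-1})$, making the intermediate $q+\bone_{\{\cdot\}}$ expansion unnecessary.
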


\begin{proof}
Since the mixture of dt-ss-si processes with a common scaling function is again a dt-ss-si process, it suffices to prove the result for the case where $\bu$ is deterministic.

The stationarity of the increments of $\{X_n\}_{n\in\bN_0}$ follows directly from the stationarity of $\{Y_k^j(n)\}_{n\in\bN_0}$ hence also of $\{Y_{k,j}(n)\}_{n\in\bN_0}$, and the independence of the sequences with different values of $k$ and $j$.

In order to show the self-similarity, first note that for $q\in\mathcal P\setminus \{p\}$, $k\in\bN_0$, $n\in\bN$ and $1\leq j\leq p^{k+1}-1$,
$$
\sum_{\ell=q(n-1)+1}^{qn}Y_{k,j}(\ell)=\sum_{m=jq(n-1)+1}^{jqn}Y_k^j(m)=Y_{k,[qj]}(n),
$$
where $[qj]$ is the residue of $qj$ modulo $p^{k+1}$. Since $\{j\in\bN: 1\leq j \leq p^{k+1}-1\}=\{[qj]\in\bN: 1\leq j\leq p^{k+1}-1\}$, we have
\begin{align*}
& \left\{\sum_{j=1}^{p^{k+1}-1}\sum_{\ell=1}^{qn}\bone_{\{J_k=j\}}Y_{k,j}(\ell)\right\}_{n\in\bN_0}\\
= & \left\{\sum_{j=1}^{p^{k+1}-1}\bone_{\{J_k=j\}}\sum_{i=1}^n\sum_{\ell=q(i-1)+1}^{qi}Y_{k,j}(\ell)\right\}_{n\in\bN_0}\\
= & \left\{\sum_{j=1}^{p^{k+1}-1}\bone_{\{J_k=j\}}\sum_{i=1}^n Y_{k,[qj]}(i)\right\}_{n\in\bN_0}\\
\dd & \left\{\sum_{[qj]=1}^{p^{k+1}-1}\sum_{i=1}^n \bone_{\{J_k=[qj]\}}Y_{k,[qj]}(i)\right\}_{n\in\bN_0},
\end{align*}
where the last equality in distribution follows from the fact that $J_k$ is uniformly distributed and is independent of everything else. Since the components with different values of $k$ are independent, we must have
$$
\{X_{qn}\}_{n\in\bN_0}\dd \{X_n\}_{n\in\bN_0}.
$$

For $\{X_{pn}\}_{n\in\bN_0}$, note that by the construction of $V_k$, for any $i\in\bN_0$,
$$
\sum_{m=i+1}^{i+p}V_k(m)=V_{k-1}\left(\left\lfloor\frac{i}{p}\right\rfloor +1\right),
$$
where ``$\lfloor\cdot\rfloor$'' gives the largest integer which is smaller than or equal to the variable. Hence
\begin{align*}
\sum_{\ell=p(n-1)+1}^{pn}Y_{k,j}(\ell)&=\sum_{m=jp(n-1)+1}^{jpn}Y_k^j(m)=\sum_{m=jp(n-1)+1}^{jpn}V_k(m+U_k^j)\\
&=\sum_{i=0}^{j-1}V_{k-1}\left(\left\lfloor\frac{p(j(n-1)+i)+U_k^j}{p}
\right\rfloor+1\right)\\
&=\sum_{i=0}^{j-1}V_{k-1}\left(\left\lfloor\frac{U_k^j}{p}
\right\rfloor+ j(n-1)+i+1\right)\\
&=\sum_{m=[j](n-1)+1}^{[j]n}V_{k-1}\left(m+\left\lfloor\frac{U_k^j}{p}\right\rfloor\right),
\end{align*}
where $[j]$ is the residue of $j$ modulo $p^k$, and the last equality follows from the periodicity of $V_{k-1}$.

On the other hand,
$$
Y_{k-1,[j]}(n)=\sum_{m=[j](n-1)+1}^{[j]n}Y_{k-1}^{[j]}(m)=\sum_{m=[j](n-1)+1}^{[j]n}V_{k-1}(m+U^{[j]}_{k-1}).
$$

Since $U_k^j$ is uniformly distributed on $\{0,1,\dots,p^{k+1}-1\}$, $\left\lfloor\frac{U_k^j}{p}\right\rfloor$ is uniformly distributed on $\{0,1,\dots,p^{k}-1\}$. Thus, we have
$$
\left\{\sum_{\ell=p(n-1)+1}^{pn}Y_{k,j}(\ell)\right\}_{n\in\bN}\dd \{Y_{k-1,[j]}(n)\}_{n\in\bN}.
$$

Moreover, because $J_k$ is uniformly distributed on $\{0,\dots, p^{k+1}-1\}$, $[J_k]$ is uniformly distributed on $\{0,\dots, p^k-1\}$, where $[J_k]$ is the residue of $J_k$ modulo $p^k$. Hence by the independence of $U_k^j$ with different values of $k$ and $j$,
$$
\left\{\sum_{j=1}^{p^{k+1}-1}\bone_{\{J_k=j\}}\sum_{m=1}^{pn}Y_{k,j}(m)\right\}_{n\in\bN}\dd \left\{\sum_{j=1}^{p^{k}-1}\bone_{\{J_{k-1}=j\}}\sum_{\ell=1}^nY_{k-1,j}(\ell)\right\}_{n\in\bN}.
$$
Again by independence, a change of index $k'=k-1$ leads to
\begin{align*}
\{X_{pn}\}_{n\in\bN_0}&=\sum_{k=0}^\infty b^k\sum_{j=1}^{p^{k+1}-1}\bone_{\{J_k=j\}}\sum_{m=1}^{pn}Y_{k,j}(m)\\
&=b\sum_{k'=0}^\infty b^{k'}\sum_{j=1}^{p^{k'+1}-1}\bone_{\{J_{k'}=j\}}\sum_{\ell=1}^{n}Y_{k',j}(\ell)\\
&=b\{X_n\}_{n\in\bN_0},
\end{align*}
where the term with $k=0$ on the right hand side of the first line can be dropped since $Y_{0,j}$ has period $p$ and the entries in one period have sum 0.

Therefore, $\{X_n\}_{n\in\bN_0}$ is dt-ss-si with scaling function given by $b(p)=b$ and $b(q)=1$ for all $q\in\mathcal P, q\neq p$.
\end{proof}

\begin{remark}
In the case where $\bu$ is deterministic and has finite support, one can show that the distribution of $X_n$ is also a generalized Bernoulli convolution. That is, when denoting
\[
X_n^{(k)}:=\sum_{j=1}^{p^{k+1}-1}\bone_{\{J_k=j\}}\sum_{m=1}^{jn}Y_k^j(m),
\]
we have that for fixed $n$, $X_n^{(k)}, k\in\bN_0$ are independent and identically distributed. One can also prove that the class of marginal distributions given here belongs to the class given in Example \ref{e:iid}, by making $Y_0^0$ follow the same distribution as $\sum_{k=0}^{J_0} u_{k}$. However, the joint distributions will differ when $p>2$ unless in certain trivial cases, which is not hard to see from the dependence structures of $\{X_n^{(k)}\}_{1\leq n\leq p-1}$. The proof is purely combinatorial and omitted here.
\end{remark}

\begin{remark}
In Example \ref{e:periodic} the processes $\{Y_k^j(n)\}_{n\in\bN_0}$ with different values of $k$ and $j$ share a common $\bu$. Following the same derivation as in the proof of Proposition \ref{p:ex2}, one can easily see that the result will still hold if $\bu$ is replaced by a sequence of independent copies of it, $\{\bu^k\}_{k\in\bN_0}$, as long as the summation in (\ref{xn}) converges. For such processes, $\{Y_k^j(n)\}_{n\in\bN_0}$ with different values of $k$ are independent, while in Example \ref{e:periodic} they are conditionally independent given $\bu$.
\end{remark}

\section{Spectral representation}
Let $\{X_n\}_{n\in\bN_0}$ be a dt-ss-si process of type II, with scaling function $b(n)=(|n|_p)^H$ for $H>0$. Intuitively, since $b(p^i)=(b(p))^i\to 0$ as $i\to\infty$, the distribution of $X_p, X_{p^2}, \dots$ will be more and more concentrated around 0. By the stationarity of the increments, this implies that $X_{n+p^i}-X_n$ is small when $i$ is large. Such an observation leads to the following spectral representation result.

Here and later, we use the notation $e(x)=e^{i2\pi x}$.

\begin{theorem}\label{t:spectral}
Let $p\in\mathcal P$, $\{X_n\}_{n\in\bN_0}$ be a stochastic process satisfying $\bE(|X_1|^2)<\infty$. Then $\{X_n\}_{n\in\bN_0}$ is dt-ss-si of type II with the scaling function $b(n)=(|n|_p)^H, H>0$ if and only if
\[
X_n=\sum_{m=1}^\infty\sum_{0<\ell<p^m,\ p\nmid \ell}A^{(m)}_\ell\left(e\left(\frac{n\ell}{p^m}\right)-1\right), \quad n\in\bN_0
\]
in the sense of convergence in $L^2(\Omega, \mathcal F, \mathbb P)$, where $\{A^{(m)}_\ell\}_{m\in\bN,0<\ell<p^m,p\nmid \ell}$ is an orthogonal sequence in $L^2(\Omega, \mathcal F, \mathbb P)$ and satisfies:
\begin{enumerate}
\item
$$
\{A^{(m)}_\ell\}_{m\in\bN,0<\ell<p^m,p\nmid \ell}\dd\Bigg\{e\left(\frac{\ell}{p^m}\right)A^{(m)}_\ell\Bigg\}_{m\in\bN,0<\ell<p^m,p\nmid \ell};
$$
\item for $q\in\cP$, $q\neq p$,
$$
\{A_\ell^{(m)}\}_{m\in\bN,0<\ell<p^m,p\nmid \ell}\dd \{A_{[q\ell]}^{(m)}\}_{m\in\bN,0<\ell<p^m,p\nmid \ell},
$$
where $[q\ell]$ is the residue of $q\ell$ modulo $p^m$;
\item
$$
\{p^{-H}A_\ell^{(m)}\}_{m\in\bN,0<\ell<p^m,p\nmid \ell}\dd \left\{\sum_{t=0}^{p-1}A_{tp^m+\ell}^{(m+1)}\right\}_{m\in\bN,0<\ell<p^m,p\nmid \ell}.
$$
\end{enumerate}
\end{theorem}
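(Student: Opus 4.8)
\emph{Strategy for the forward implication.}
Assume $\mathbf X$ is dt-ss-si of type II and in $L^2$; then $X_0=0$ a.s., and the increment sequence $Y_n:=X_{n+1}-X_n$ is a mean-zero, $L^2$-stationary sequence. I would take its Cram\'er spectral representation $Y_n=\int_{[0,1)}e(n\lambda)\,dZ_Y(\lambda)$, with $Z_Y$ an orthogonal random measure and $F_Y:=\bE|Z_Y(\cdot)|^2$ a finite measure, $F_Y([0,1))=\bE|X_1|^2$. The self-similarity $\{X_{Nm}\}_{m}\dd\{b(N)X_m\}_m$ holds for every $N\in\bN$ with $b(N)=p^{-v_p(N)H}$ ($v_p$ the $p$-adic valuation); writing $X_{N(n+1)}-X_{Nn}=\sum_{r=0}^{N-1}Y_{Nn+r}$ and comparing autocovariances turns this into the single measure identity
\[
(\times N)_*\big(|g_N|^2F_Y\big)=b(N)^2F_Y,\qquad g_N(\lambda):=\sum_{r=0}^{N-1}e(r\lambda)=\frac{e(N\lambda)-1}{e(\lambda)-1},
\]
where $(\times N)$ denotes $\lambda\mapsto N\lambda\bmod 1$ and $(\times N)_*$ its pushforward. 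Since pushing forward by $\times N$ and multiplying by a continuous weight both preserve the pure-point/continuous decomposition $F_Y=F_Y^{pp}+F_Y^c$, each part satisfies this identity on its own. The whole argument then reduces to proving $F_Y^c=0$ and that $F_Y^{pp}$ is carried by $D:=\{\ell/p^m:\ m\ge 1,\ 0<\ell<p^m,\ p\nmid\ell\}$.

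\emph{Locating the spectral mass.}
For the continuous part, taking total masses in the identity with $N=p^i$ gives $\int|g_{p^i}|^2\,dF_Y^c=p^{-2iH}F_Y^c([0,1))$, which is summable in $i$, so $|g_{p^i}(\lambda)|\to 0$ for $F_Y^c$-a.e.\ $\lambda$. An elementary telescoping argument shows that for $\lambda\in(0,1)$ one has $|g_{p^i}(\lambda)|\to 0$ iff the distance $d(p^i\lambda,\bZ)$ tends to $0$ iff $\lambda$ has a finite base-$p$ expansion, i.e.\ $\lambda\in D$; as $D$ is countable and $F_Y^c$ atomless, $F_Y^c=0$. For the atomic part, put $w(\mu):=F_Y(\{\mu\})$. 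Restricting the $N=p$ identity to the single preimage $\mu$ gives $w(p\mu\bmod 1)\ge p^{2H}|g_p(\mu)|^2w(\mu)$, and at $\mu=0$ it forces $w(0)=0$. If $\mu_0$ is rational but not in $D$, finitely many $\times p$-steps reach a rational $\mu_\ast$ with denominator $t$ prime to $p$; its $\times p$-orbit is finite and stays at distance $\ge 1/t$ from $\bZ$, yet iterating the displayed bound along the orbit forces $|g_{p^i}(\mu_\ast)|\to 0$ unless $w(\mu_\ast)=0$, whence $w(\mu_0)=0$ by running the chain backward. If $\mu_0$ is irrational, then for each $N\ge1$ prime to $p$ the identity yields $w(N\mu_0\bmod1)\ge|g_N(\mu_0)|^2w(\mu_0)$ with $g_N(\mu_0)\ne 0$; since $N\mapsto N\mu_0\bmod 1$ is injective, summability of $w$ forces $\sum_{N\ge1,\ p\nmid N}|g_N(\mu_0)|^2<\infty$, contradicting $\sum_{N\le M,\ p\nmid N}|g_N(\mu_0)|^2=(\sin^2\pi\mu_0)^{-1}\sum_{N\le M,\ p\nmid N}\sin^2(N\pi\mu_0)\sim\tfrac{(1-1/p)M}{2\sin^2\pi\mu_0}$. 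Hence $F_Y$ is carried by $D$.

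\emph{Reading off the representation, and the converse.}
With $F_Y$ supported on $D$ we get $Y_n=\sum_{m\ge1}\sum_{0<\ell<p^m,\,p\nmid\ell}a^{(m)}_\ell\,e(n\ell/p^m)$ in $L^2$, where $a^{(m)}_\ell:=Z_Y(\{\ell/p^m\})$ are pairwise orthogonal and $\sum\|a^{(m)}_\ell\|^2=\bE|X_1|^2<\infty$; summing $X_n=\sum_{j<n}Y_j$ gives $X_n=\sum_{(m,\ell)}A^{(m)}_\ell(e(n\ell/p^m)-1)$ with $A^{(m)}_\ell:=a^{(m)}_\ell/(e(\ell/p^m)-1)$, the series converging in $L^2$. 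The family $(A^{(m)}_\ell)$ is extracted from the path $(X_n)$ by a fixed measurable functional (difference, then spectral Ces\`aro-average at each $\ell/p^m$, then divide by $e(\ell/p^m)-1$); applying this functional to both sides of $\{X_{n+1}-X_1\}_n\dd\{X_n\}_n$ (stationary increments), of $\{X_{qn}\}_n\dd\{X_n\}_n$ for $q\in\cP\setminus\{p\}$, and of $\{X_{pn}\}_n\dd\{p^{-H}X_n\}_n$, and computing directly that differencing/rescaling the index carries $(A^{(m)}_\ell)$ to $(e(\ell/p^m)A^{(m)}_\ell)$, to $(A^{(m)}_{q^{-1}\ell\bmod p^m})$, and to $(\sum_{t=0}^{p-1}A^{(m+1)}_{tp^m+\ell})$ respectively (the level-$1$ block vanishing in the last since $e(n\ell)=1$), yields precisely properties (1), (2), (3). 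The converse implication is this computation run backwards: from the stated representation with orthogonal $A^{(m)}_\ell$, orthogonality and $L^2$-convergence give $\bE|X_n|^2\le n^2\bE|X_1|^2<\infty$, and (1)--(3) (with (1) iterated over the shift) produce $\{X_{n+k}-X_k\}_n\dd\{X_n\}_n$, $\{X_{qn}\}_n\dd\{X_n\}_n$, and $\{X_{pn}\}_n\dd\{p^{-H}X_n\}_n$, i.e.\ $\mathbf X$ is dt-ss-si with $b(n)=(|n|_p)^H$.

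\emph{Main obstacle.}
The delicate point is the atomic part of ``locating the spectral mass'', and within it the irrational case: the only available lever there is self-similarity by integers $N$ prime to $p$, and the contradiction has to be manufactured by turning the lower bounds $w(N\mu_0\bmod1)\ge|g_N(\mu_0)|^2w(\mu_0)$ together with injectivity into a convergent series and then defeating it through the linear growth of $\sum_{N\le M}\sin^2(N\pi\mu_0)$. The continuous-part step is easy once one isolates the rigidity of $\lambda$ with $d(p^i\lambda,\bZ)\to0$, and the rest---the covariance computation behind the measure identity, its compatibility with the pure-point/continuous splitting, and the transfer of (1)--(3) through the coefficient functional---is routine, though it must be carried out with some care about the measurable selection of the $A^{(m)}_\ell$.
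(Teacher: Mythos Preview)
Your approach is correct and genuinely different from the paper's, though both arrive at the same spectral representation.

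The paper works directly with the almost periodic structure of $\{X_n\}$ (established via $\|X_{n+p^m}-X_n\|_2=p^{-mH}\|X_1\|_2$) and its associated Fourier series. For the crucial step of showing that only $p$-adic frequencies appear, it bounds the coefficient $a(\lambda)$ at an arbitrary non-$p$-adic $\lambda$ in one stroke: writing $\frac{1}{Np^m}\sum_{n\le Np^m}X_ne(-n\lambda)$ as a main term controlled by the geometric sum $\sum_ke(-kp^m\lambda)$ (which is $O(1)$ because $p^m\lambda\notin\bZ$) plus a remainder controlled by $\bE|X_{kp^m+j}-X_j|^2=p^{-2mH}\bE X_1^2$, a Cauchy--Schwarz argument yields $\bE|a(\lambda)|^2\le p^{-2mH}\bE X_1^2$ for every $m$, hence $a(\lambda)=0$. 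Orthogonality is then deduced separately from the rotation invariance (your property (1)), and uniform $L^2$-convergence from orthogonality together with property (3).

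You instead pass to the Cram\'er spectral measure $F_Y$ of the stationary increment process---so orthogonality of the coefficients comes for free from the orthogonal increments of $Z_Y$---and encode self-similarity as the pushforward identity $(\times N)_*(|g_N|^2F_Y)=b(N)^2F_Y$. Ruling out mass off $D$ then becomes a dynamical problem on the maps $\lambda\mapsto N\lambda\bmod 1$: a Borel--Cantelli argument for the continuous part, orbit periodicity for rational atoms, and injectivity combined with the linear growth of $\sum_{N\le M,\,p\nmid N}\sin^2(N\pi\lambda)$ for irrational atoms. This is more elaborate---three separate cases versus the paper's single uniform estimate---but it makes the ergodic-theoretic content explicit and delivers orthogonality and $L^2$-convergence of the $Y$-series without extra work. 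The derivation of properties (1)--(3) as path functionals, and the converse direction, are essentially the same in both approaches.
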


Many results are needed for the proof of Theorem \ref{t:spectral}. We start by introducing the notion of almost periodic functions with values in Banach spaces, which can be found, for example, in \cite{cor}.

\begin{definition}
Let $(X,\Vert\cdot\Vert)$ be a Banach space. A sequence $f:\bZ\to X$ is {\it almost periodic} if for all $\ee>0$, there exists $N(\ee)>0$, such that any consecutive $N(\ee)$ integers contain an integer $T$ with
\[
\Vert f(n+T)-f(n)\Vert<\ee, \quad\text{ for all }n\in\bZ.\]
\end{definition}

Let $\{X_n\}_{n\in\bN_0}$ be a dt-ss-si process of type II. By the stationarity of the increments, $\{Y_n:=X_{n+1}-X_n\}_{n\in\bN_0}$ is a stationary process. Kolmogorov's extension theorem allows us to extend this sequence to $\mathbb Z$ while keeping the stationarity. That is, there exists a stationary process $\{Y'_n\}_{n\in \mathbb Z}$, such that $\{Y'_n\}_{n\in\bN_0}\dd \{Y_n\}_{n\in\bN_0}$. Define
$$
X'_n=\begin{cases}
\sum_{i=0}^{n-1}Y'_i & n\geq0,\\
-\sum_{i=-n}^{-1}Y'_i & n<0,
\end{cases}
$$
then $\{X'_n\}_{n\in \mathbb Z}$ is clearly a dt-ss-si process on $\mathbb Z$, in the sense that it is of stationary increments, and for any $n\in\bN$, there exists $b(n)>0$, such that
$$
\{X'_{nm}\}_{m\in\mathbb Z}\dd \{b(n)X'_m\}_{m\in \mathbb Z}.
$$
Since $\{X'_n\}_{n\in \bN_0}\dd\{X_n\}_{n\in\bN_0}$, $\{X'_n\}_{n\in\mathbb Z}$ is an extension of $\{X_n\}_{n\in\bN_0}$ on $\mathbb Z$. Moreover, by the stationarity of the increments, $\{X'_n\}_{n\in\mathbb Z}$ is an almost periodic sequence in $L^2(\Omega, \mathcal F, \mathbb P)$ if $\{X_n\}_{n\in\bN_0}$ is in $L^2(\Omega, \mathcal F, \mathbb P)$.

\begin{proposition}\label{p:extension}
Let $\{X_n\}_{n\in\bN_0}$ be a dt-ss-si process of type II satisfying $\bE(X_1^2)<\infty$. Then it has an extension on $\mathbb Z$, denoted by $\{X'_n\}_{n\in\mathbb Z}$, which is an almost periodic sequence in $L^2(\Omega, \mathcal F, \mathbb P)$.
\end{proposition}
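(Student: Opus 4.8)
The plan is to flesh out the construction sketched just before the statement and then to read off the quantitative decay needed for almost periodicity from the type II scaling relation. Since $\{X_n\}_{n\in\bN_0}$ is of type II, Proposition \ref{p:type2property}(1) gives $X_0=0$ almost surely, so $X_n=\sum_{i=0}^{n-1}(X_{i+1}-X_i)$ is a measurable function of the stationary increment sequence $\{Y_n=X_{n+1}-X_n\}_{n\in\bN_0}$. Kolmogorov's extension theorem then yields a stationary $\{Y'_n\}_{n\in\mathbb Z}$ with $\{Y'_n\}_{n\in\bN_0}\dd\{Y_n\}_{n\in\bN_0}$, and the partial sums $X'_n$ defined in the paragraph before the proposition form a process on $\mathbb Z$ with stationary increments whose restriction to $\bN_0$ has the law of $\{X_n\}_{n\in\bN_0}$; in particular $X'_0=0$, $X'_1\dd X_1$, and for $T\in\bN$ the marginal self-similarity \eqref{e:marginalss} gives $X'_T\dd X_T\dd b(T)X_1$, while $X'_{-T}=-\sum_{i=-T}^{-1}Y'_i\dd -\sum_{i=0}^{T-1}Y'_i=-X'_T$ by stationarity of $\{Y'_n\}$.

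The heart of the matter is the identity $\|X'_{n+T}-X'_n\|_{L^2}=(|T|_p)^H\,\|X_1\|_{L^2}$, valid for every $n\in\mathbb Z$ and every $T\in\mathbb Z\setminus\{0\}$, where $\|\cdot\|_{L^2}$ is the norm of $L^2(\Omega,\mathcal F,\mathbb P)$. Indeed, for $T\ge 1$ we have $X'_{n+T}-X'_n=\sum_{i=n}^{n+T-1}Y'_i\dd\sum_{i=0}^{T-1}Y'_i=X'_T$ by stationarity of $\{Y'_n\}$, and the same holds for $T\le -1$, so $\|X'_{n+T}-X'_n\|_{L^2}=\|X'_T\|_{L^2}$ is independent of $n$; combining $X'_T\dd b(T)X_1$ (for $T\ge 1$) with $X'_{-T}\dd-X'_T$ and $b(n)=(|n|_p)^H$ yields $\|X'_T\|_{L^2}=(|T|_p)^H\|X_1\|_{L^2}$. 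This is precisely where type II is used, and its special feature is that the right-hand side depends on $T$ only through $|T|_p$, so it is small for \emph{every} multiple of a high power of $p$, however large.

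Almost periodicity in $L^2$ is then immediate. Given $\ee>0$, choose $m\in\bN$ with $p^{-mH}\|X_1\|_{L^2}<\ee$ and set $N(\ee)=p^m$. Any block of $p^m$ consecutive integers contains a multiple $T$ of $p^m$; for such $T$ one has $|T|_p\le p^{-m}$ (trivially so if $T=0$), hence $\|X'_{n+T}-X'_n\|_{L^2}=(|T|_p)^H\|X_1\|_{L^2}\le p^{-mH}\|X_1\|_{L^2}<\ee$ for all $n\in\mathbb Z$. This is exactly the defining condition for $\{X'_n\}_{n\in\mathbb Z}$ to be an almost periodic sequence in $L^2(\Omega,\mathcal F,\mathbb P)$, which completes the proof.

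I do not expect a real obstacle here; the only point needing a little care is verifying that the canonical $\mathbb Z$-extension inherits from stationarity of the increments both the marginal self-similarity on the positive part and the reflection symmetry $X'_{-T}\dd -X'_T$, and this is the short bookkeeping indicated above. Everything else is a one-line consequence of the fact that for a type II process the scaling function at $T$ sees only the $p$-adic size of $T$.
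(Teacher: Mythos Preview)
Your proof is correct and follows essentially the same route as the paper: extend via the stationary increment sequence, use stationarity to reduce $\|X'_{n+T}-X'_n\|_{L^2}$ to $\|X'_T\|_{L^2}$, then use the type II scaling $b(T)=(|T|_p)^H$ to see that any multiple of a high power of $p$ works as an $\ee$-period. The paper's version is terser---it simply writes down $N(\ee)$ as an explicit power of $p$ and computes $\bE(|X'_{n+\tau}-X'_n|^2)=\bE(X_\tau^2)\le p^{-2mH}\bE(X_1^2)$---while you add the (harmless) bookkeeping that $X'_{-T}\dd -X'_T$ to cover negative $T$ explicitly; but the substance is identical.
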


\begin{proof}
Let $\{X'_n\}_{n\in\mathbb Z}$ be the extension of $\{X_n\}_{n\in\bN_0}$ on $\mathbb Z$ given in the paragraph above Proposition \ref{p:extension}. For any $\ee>0$, take
$$
N(\ee)=p^{\left\lceil -\frac{1}{2H}\log_p\left(\frac{\ee}{\bE(X_1^2)}\right)\right\rceil},
$$
where $\lceil\cdot\rceil$ is the smallest integer which is larger than or equal to the argument. Then, every consecutive $N(\ee)$ integers include a number $\tau$ satisfying $N(\ee)|\tau$. We now have
$$
\sup_{n\in\bN}\bE(|X'_{n+\tau}-X'_n|^2)=\bE((X_\tau)^2)\leq p^{-2H\left\lceil -\frac{1}{2H}\log_p\left(\frac{\ee}{\bE(X_1^2)}\right)\right\rceil}\bE(X_1^2)\leq\ee.
$$
\end{proof}

We call a stochastic process in $L^2(\Omega, \mathcal F, \mathbb P)$ with index set $\bN_0$ an almost periodic process, if it has an extension on $\mathbb Z$ which is almost periodic in $L^2(\Omega, \mathcal F, \mathbb P)$.

By \cite{cor} (Sections 6.3, 1.3), we can associate an almost periodic sequence in $L^2(\Omega, \mathcal F, \mathbb P)$, hence also $\{X_n\}_{n\in\bN_0}$, with a Fourier series:
\begin{equation}\label{e:fouriergeneral}
X_n\sim\sum_{k=1}^\infty A_ke(n\lambda_k), \quad n\in\bN_0
\end{equation}
for some countable set of real numbers $\{\lambda_k\}_{k=1}^\infty$. $\{A_k\}_{k\in\bN}\subset L^2(\Omega, \mathcal F, \mathbb P)$ is given by
\begin{equation}\label{ak}
A_k=\lim_{N\to\infty}\frac{1}{N}\sum_{n=1}^NX_ne(-n\lambda_k), \quad k\in\bN
\end{equation}
in $L^2(\Omega, \mathcal F, \mathbb P)$.
If moreover, the right hand side of (\ref{e:fouriergeneral}) is uniformly convergent in $L^2(\Omega, \mathcal F, \mathbb P)$, then
\begin{equation*}\label{e:fouriereq}
X_n=\sum_{k=1}^\infty A_ke(n\lambda_k), \quad n\in\bN_0,
\end{equation*}
where the infinite sum is in the sense of $L^2(\Omega, \mathcal F, \mathbb P)$. We do not have the convergence at this moment, but will establish it using the properties of the process $\{X_n\}_{n\in\bN_0}$.

The following lemma shows that the coefficient $A_k$ can be nonzero only if the corresponding $\lambda_k$ is a p-adic rational.

\begin{lemma}\label{l:Akdescription}
Let $\{X_n\}_{n\in\bN_0}$ be a dt-ss-si process of type II satisfying $\bE(X_1^2)<\infty$, then
$$
X_n\sim\sum_{k=1}^\infty A_ke(n\lambda_k), \quad n\in\bN_0,
$$
where $\{A_k\}_{k\in\bN}\subset L^2(\Omega, \mathcal F, \mathbb P)$ and $\{\lambda_k\}_{k\in\bN}$ is the set of $p$-adic rationals in $[0,1)$.
\end{lemma}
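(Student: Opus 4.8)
The plan is to determine the admissible Fourier exponents directly from Bessel's inequality, exploiting the quantitative hallmark of a type II process: $X_{n+p^j}-X_n$ becomes small in $L^2$, uniformly in $n$, as $j\to\infty$. I work with the almost periodic extension $\{X'_n\}_{n\in\bZ}$ from Proposition \ref{p:extension} and its Fourier series $X'_n\sim\sum_k A_ke(n\lambda_k)$, with coefficients given by (\ref{ak}); replacing each $\lambda_k$ by its fractional part we may assume the $\lambda_k$ lie in $[0,1)$ and are distinct. Fix $j\in\bN$. The sequence $n\mapsto X'_{n+p^j}-X'_n$ is again almost periodic in $L^2(\Omega,\mathcal F,\mathbb P)$, being a difference of two translates of $\{X'_n\}$; by shift-invariance of the mean value its Fourier coefficient at frequency $\lambda$ equals $(e(p^j\lambda)-1)A_\lambda$, so its set of exponents is contained in $\{\lambda_k\}$. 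Finally, by the stationarity of the increments (with $X'_0=0$) and the self-similarity with $b(p^j)=p^{-jH}$, we have $\bE|X'_{n+p^j}-X'_n|^2=\bE(X_{p^j}^2)=p^{-2jH}\bE(X_1^2)$ for every $n$.

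With this in hand, I apply Bessel's inequality for Hilbert-space-valued almost periodic sequences (\cite{cor}) to $n\mapsto X'_{n+p^j}-X'_n$; since its mean-square is independent of $n$, this reads
\[
\sum_k\left|e(p^j\lambda_k)-1\right|^2\bE|A_k|^2\ \le\ p^{-2jH}\,\bE(X_1^2).
\]
As $H>0$, the right-hand side tends to $0$ when $j\to\infty$, and all summands on the left are nonnegative, so for every $k$ with $\bE|A_k|^2>0$ we obtain $\left|e(p^j\lambda_k)-1\right|\to 0$, i.e. $\mathrm{dist}(p^j\lambda_k,\bZ)\to 0$. It then remains to note the elementary fact that, for $\lambda\in[0,1)$, $\mathrm{dist}(p^j\lambda,\bZ)\to 0$ already forces $\lambda$ to be a $p$-adic rational: once $\mathrm{dist}(p^j\lambda,\bZ)<\tfrac{1}{2p}$ for all large $j$, write $\delta_j\in(-\tfrac{1}{2},\tfrac{1}{2})$ for the difference between $p^j\lambda$ and its nearest integer; multiplication by $p$ takes that integer to $p$ times it (since $|p\delta_j|<\tfrac{1}{2}$), whence $\delta_{j+1}=p\delta_j$, and as $|\delta_j|$ stays bounded this is possible only if $\delta_j=0$ eventually, i.e. $p^J\lambda\in\bZ$ for some $J$. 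Thus every exponent $\lambda_k$ with $A_k\ne0$ is a $p$-adic rational in $[0,1)$; since these form a countable set and exponents carrying a zero coefficient may be adjoined or deleted freely, we may take $\{\lambda_k\}_{k\in\bN}$ to be exactly the set of $p$-adic rationals in $[0,1)$.

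The step demanding care is the bookkeeping of the almost periodic Fourier calculus — that Bessel's inequality and the formula for the Fourier coefficients of $n\mapsto X'_{n+p^j}-X'_n$ carry over verbatim to the $L^2(\Omega,\mathcal F,\mathbb P)$-valued setting — which is exactly what is imported from \cite{cor}; the number-theoretic input and everything else are routine. If one wants in addition that every $p$-adic rational in $[0,1)$ genuinely occurs, so that the displayed set is precisely the spectrum, this can be read off from a direct evaluation of $\bE|A_\lambda|^2$ via the covariance formula of Proposition \ref{p:type2property}, property (7): averaging the covariances over both indices produces an explicit geometric series in $p^{-(2H+1)}$, strictly positive for each such $\lambda$ because $H>0$. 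This refinement is not needed in the sequel.
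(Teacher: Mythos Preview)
Your argument is correct and is a genuinely cleaner route than the paper's. Both proofs exploit the same quantitative fact --- $\bE|X'_{n+p^j}-X'_n|^2=p^{-2jH}\bE(X_1^2)$ --- but the paper bounds each individual coefficient $a(\lambda)$ by hand: it splits the defining Ces\`aro sum over a block of length $Np^m$ into a ``periodic'' part and a remainder involving $X_{kp^m+j}-X_j$, applies Cauchy--Schwarz, and uses the boundedness of the geometric sum $\sum_k e(-kp^m\lambda)$ (valid exactly when $p^m\lambda\notin\bZ$) to kill the first part, obtaining $\bE|a(\lambda)|^2\le p^{-2mH}\bE(X_1^2)$ for every $m$. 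You instead apply Bessel's inequality to the differenced sequence $n\mapsto X'_{n+p^j}-X'_n$ all at once, converting the smallness of the norm directly into $|e(p^j\lambda_k)-1|^2\,\bE|A_k|^2\to 0$, and then dispose of the contrapositive via the short $\delta_{j+1}=p\delta_j$ argument. Your packaging avoids the explicit splitting and the Cauchy--Schwarz step entirely; the paper's computation, on the other hand, is more self-contained in that it does not appeal to the abstract Bessel inequality for Hilbert-space-valued almost periodic sequences. The final remark about positivity of $\bE|A_\lambda|^2$ via Proposition~\ref{p:type2property}(7) is a nice observation not present in the paper, and indeed is unnecessary for the statement as written.
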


\begin{proof}
 It suffices to show $A_k=0$ in (\ref{e:fouriergeneral}) for $\lambda_k$ not of the form $\ell p^{-m}$ where $\ell\in\bN_0, m\in\bN$. Let $\lambda\in\bR$ be such that $p^m\lambda$ is not an integer for any $m\in\bN$. Using (\ref{ak}), for every $m\in\bN$, the coefficient corresponding to $\lambda$, denoted by $a(\lambda)$, satisfies
$$
\bE\left(|a(\lambda)|^2\right)=\lim_{N\to\infty}\bE\left(\left|\frac{1}{Np^m}\sum_{n=1}^{Np^m}X_ne(-n\lambda)\right|^2\right).
$$

By Cauchy-Schwarz inequality,
\begin{align*}
&\bE\left(\left|\sum_{n=1}^{Np^m}X_ne(-n\lambda)\right|^2\right)\\
=&\bE\left(\left|\sum_{j=1}^{p^m}\sum_{k=0}^{N-1}e(-(kp^m+j)\lambda)X_j+\sum_{j=1}^{p^m}\sum_{k=1}^{N-1}e(-(kp^m+j)\lambda)(X_{kp^m+j}-X_j)\right|^2\right)\\
\leq& Np^m\sum_{j=1}^{p^m}\bE\left(\left|\sum_{k=0}^{N-1}e(-(kp^m+j)\lambda)X_j\right|^2\right)\\
&+Np^m\sum_{j=1}^{p^m}\sum_{k=1}^{N-1}\bE\left(\left|e(-(kp^m+j)\lambda)(X_{kp^m+j}-X_j)\right|^2\right)\\
=&Np^m\left(\sum_{j=1}^{p^m}\left|\sum_{k=0}^{N-1}e(-(kp^m+j)\lambda)\right|^2\bE(X_j^2)+\sum_{j=1}^{p^m}\sum_{k=1}^{N-1}\bE((X_{kp^m+j}-X_j)^2)\right)\\
\leq& Np^{2m}\left|\sum_{k=0}^{N-1}e(-(kp^m+j)\lambda)\right|^2\max_{1\leq j\leq p^m}\bE(X_j^2)+Np^m\sum_{j=1}^{p^m}\sum_{k=1}^{N-1}p^{-2mH}\bE(X_1^2)\\
\leq & Np^{2m}\left|\sum_{k=0}^{N-1}e(-(kp^m+j)\lambda)\right|^2\max_{1\leq j\leq p^m}\bE(X_j^2)+N^2p^{2m}p^{-2mH}\bE(X_1^2).
\end{align*}

Hence
$$
\bE\left(|a(\lambda)|^2\right)\leq \bE(X _1^2)\lim_{N\to\infty}\left|\frac{1}{\sqrt{N}}\sum_{k=0}^{N-1}e(-(kp^m+j)\lambda)\right|^2+\bE(X_1^2)p^{-2mH}.
$$

As $p^m\lambda$ is not an integer, it is easy to see that
\begin{align*}
\left|\frac{1}{\sqrt{N}}\sum_{k=0}^{N-1}e(-(kp^m+j)\lambda)\right|&=\left|\frac{1}{\sqrt{N}}\frac{e(-j\lambda)-e(-(Np^m+j)\lambda)}{1-e(-p^m\lambda)}\right|\\
&\leq\frac{2}{\sqrt{N}}\left|\frac{1}{1-e(-p^m\lambda)}\right|,
\end{align*}
which converges to $0$ as $N\to\infty$. Therefore
$$
\bE\left(|a(\lambda)|^2\right)\leq p^{-2mH}\bE(X_1^2).
$$

Since this holds for all $m\in\bN$, letting $m\to\infty$ leads to the conclusion that $A_k$ can only be non-zero if the corresponding $\lambda_k$ is a $p$-adic rational. Finally, since $e(x)$ has period $1$, $\{e(n\lambda)\}_{n\in\bN_0}=\{e(n(\lambda+1))\}_{n\in\bN_0}$. Hence we only need $p$-adic rationals in $[0,1)$.\end{proof}

\begin{remark}
The above lemma also holds in $L^1(\Omega)$ if $\bE(|X_1|)<\infty$. The proof is essentially the same by replacing the Cauchy-Schwarz inequality by the triangle inequality. For simplicity, we only consider the $L^2$ case. Also note that for $L^1\setminus L^2$, the convergence of the associated Fourier series is not guaranteed, hence although still valid, the result of Lemma \ref{l:Akdescription} becomes less important.
\end{remark}

Lemma \ref{l:Akdescription} allows us to further explore the detailed impact of the stationarity of the increments and the self-similarity of the process to the representation (\ref{e:fouriergeneral}). We start from the following simple observation about the increment process.

\begin{lemma}\label{l:incrementAk}
Let $\{X_n\}_{n\in\bN_0}$ be a dt-ss-si process of type II satisfying $\bE(X_1^2)<\infty$ and
$$
X_n\sim\sum_{k=1}^\infty A_ke(n\lambda_k), \quad n\in\bN_0.
$$
Then its increment process $\{\tilde{X}_n\}_{n\in\bN_0}$, given by
\[
\tilde{X}_n=X_{n+1}-X_n, \quad n\in \bN_0,
\]
is almost periodic in $L^2(\Omega, \mathcal F, \mathbb P)$ and stationary. Moreover,
\[
\tilde{X}_n\sim\sum_{k=1}^\infty \tilde A_ke(n\lambda_k), \quad n\in \bN_0,
\]
where $\tilde A_k=A_k(e(\lambda_k)-1)$.
\end{lemma}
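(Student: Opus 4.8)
The plan is to prove the two assertions of Lemma~\ref{l:incrementAk} in turn: first that the increment process is almost periodic in $L^2$ and stationary, and second that its Fourier coefficients are $\tilde A_k = A_k(e(\lambda_k)-1)$. The stationarity of $\{\tilde X_n\}_{n\in\bN_0}$ is immediate, since $\tilde X_n = X_{n+1}-X_n$ is exactly the increment process of $\{X_n\}_{n\in\bN_0}$, which is stationary by the defining property (\ref{si}) of a dt-ss-si process. For the almost periodicity, I would invoke Proposition~\ref{p:extension}: the extended process $\{X'_n\}_{n\in\bZ}$ is almost periodic in $L^2$, and $\tilde X'_n := X'_{n+1}-X'_n$ is then almost periodic as well, because for any $\ee>0$ one can use the same $N(\ee/4)$ from the proof of Proposition~\ref{p:extension} (or $N(\ee)$ with a factor adjustment) so that a translation period $T$ satisfying $\Vert X'_{n+T}-X'_n\Vert_2 < \ee/2$ for all $n$ also gives $\Vert \tilde X'_{n+T}-\tilde X'_n\Vert_2 \le \Vert X'_{n+1+T}-X'_{n+1}\Vert_2 + \Vert X'_{n+T}-X'_n\Vert_2 < \ee$ by the triangle inequality. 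Since $\{\tilde X'_n\}_{n\in\bZ}$ restricts to $\{\tilde X_n\}_{n\in\bN_0}$, the increment process is almost periodic in the sense defined just before Proposition~\ref{p:extension}.

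For the Fourier coefficients, I would compute directly from the defining formula (\ref{ak}). The coefficient of $\tilde X$ at frequency $\lambda_k$ is
\[
\tilde A_k = \lim_{N\to\infty}\frac{1}{N}\sum_{n=1}^N \tilde X_n\, e(-n\lambda_k) = \lim_{N\to\infty}\frac{1}{N}\sum_{n=1}^N (X_{n+1}-X_n)\, e(-n\lambda_k)
\]
in $L^2$. The idea is the classical Abel summation / index shift: write $\sum_{n=1}^N X_{n+1}e(-n\lambda_k) = e(\lambda_k)\sum_{n=2}^{N+1} X_n e(-n\lambda_k)$, so that
\[
\sum_{n=1}^N(X_{n+1}-X_n)e(-n\lambda_k) = e(\lambda_k)\sum_{n=2}^{N+1}X_n e(-n\lambda_k) - \sum_{n=1}^N X_n e(-n\lambda_k) = (e(\lambda_k)-1)\sum_{n=1}^N X_n e(-n\lambda_k) + R_N,
\]
where $R_N$ collects the boundary terms $e(\lambda_k)\big(X_{N+1}e(-(N+1)\lambda_k) - X_1 e(-\lambda_k)\big)$. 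Dividing by $N$: the main term converges to $(e(\lambda_k)-1)A_k$ in $L^2$ by the definition of $A_k$, and $\frac1N R_N \to 0$ in $L^2$ because $\bE(X_{N+1}^2) = \bE(X_1^2)\,(|N+1|_p)^{2H} \le \bE(X_1^2)$ is uniformly bounded, so $\Vert \frac1N R_N\Vert_2 = O(1/N)$. This yields $\tilde A_k = A_k(e(\lambda_k)-1)$.

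The routine parts here are genuinely routine, so I do not expect a serious obstacle; the only point requiring minor care is justifying that the $L^2$-limits in the two manipulations (the index shift and the separate convergence of main term and remainder) can be combined — but this is legitimate because $L^2$-convergence is preserved under addition of sequences, and the remainder's $L^2$-norm is controlled by the uniform bound $\bE(X_n^2)\le \bE(X_1^2)$ coming from type-II self-similarity (each $X_n \dd (|n|_p)^H X_1$ with $(|n|_p)^H \le 1$). If one wants to be fully careful about the almost periodicity claim, the mild subtlety is that ``almost periodic'' is stated for sequences on $\bZ$, so one must pass through the extension $\{X'_n\}_{n\in\bZ}$ rather than working on $\bN_0$ directly; this is exactly the device already set up in the paragraph preceding Proposition~\ref{p:extension}, so it costs nothing.
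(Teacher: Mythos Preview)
Your proposal is correct and follows essentially the same approach as the paper. The paper's proof is extremely terse---it dismisses stationarity as ``trivial'', handles almost periodicity via the single inequality $\bE(|\tilde X_{p^m+n}-\tilde X_n|^2)\le 2\bigl(\bE(|X_{p^m+n+1}-X_{n+1}|^2)+\bE(|X_{p^m+n}-X_n|^2)\bigr)$ (the squared version of your triangle-inequality step, with $T=p^m$), and declares the Fourier-coefficient relation ``obvious from (\ref{ak}) and the relation $\tilde X_n=X_{n+1}-X_n$''---so your index-shift computation with the $O(1/N)$ boundary remainder is exactly the detail the paper omits.
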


\begin{proof}
The stationarity is trivial, and the almost periodicity follows directly from
\begin{multline*}
\bE(|(X_{p^m+n+1}-X_{p^m+n})-(X_{n+1}-X_n)|^2)\\
\leq 2(\bE(|X_{p^m+n+1}-X_{n+1}|^2)+\bE(|X_{p^m+n}-X_n|^2)).
\end{multline*}
The representation is obvious from (\ref{ak}) and the relation $\tilde{X}_n=X_{n+1}-X_n$.
\end{proof}

As a consequence of Lemmas \ref{l:Akdescription} and \ref{l:incrementAk}, the increment process $\{\tilde X_n\}_{n\in\bN_0}$ is associated with the Fourier series
$$
\sum_{m\in\bN}\sum_{0<\ell<p^m,\ p\nmid \ell} \tilde A^{(m)}_\ell e\left(\frac{n\ell}{p^m}\right).
$$
Intuitively, the original single summation in Lemma \ref{l:incrementAk} can be divided into different layers according to the $p$-adic norm of $\lambda_k$. Based on this decomposition, the stationarity of $\{\tilde X_n\}_{n\in\bN_0}$ implies a rotation-invariant property of the coefficients $\{\tilde A^{(m)}_\ell\}_{m\in\bN, 0<\ell<p^m,p\nmid \ell}$, which further implies the orthogonality.

\begin{lemma}\label{l:stationary}
Let $\{Y_n\}_{n\in\bN_0}$ be an almost periodic process in $L^2(\Omega, \mathcal F, \mathbb P)$ such that
$$
Y_n\sim\sum_{m\in\bN}\sum_{0<\ell<p^m,\ p\nmid \ell} \tilde A^{(m)}_\ell e\left(\frac{n\ell}{p^m}\right), \quad n\in\bN_0.\label{ynrep}
$$
If $\{Y_n\}_{n\in\bN_0}$ is stationary, then
\begin{equation}\label{e:rotation}
\left\{\tilde A^{(m)}_\ell\right\}_{m\in\bN,0<\ell<p^m,p\nmid \ell}\dd\left\{e\left(\frac{\ell}{p^m}\right)\tilde A^{(m)}_\ell\right\}_{m\in\bN,0<\ell<p^m,p\nmid \ell},
\end{equation}
in particular, $\{\tilde A^{(m)}_\ell\}_{m\in\bN,0<\ell<p^m,p\nmid \ell}$ is an orthogonal sequence in $L^2(\Omega, \mathcal F, \mathbb P)$.
\end{lemma}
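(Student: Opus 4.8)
The plan is to use that the Fourier-coefficient map (\ref{ak}) intertwines the one-step shift $n\mapsto n+1$ with multiplication of the coefficient attached to a frequency $\lambda$ by $e(\lambda)$, and then to invoke stationarity. A preliminary fact I would record first is that an almost periodic sequence in the Banach space $L^2(\Omega,\mathcal F,\mathbb P)$ is bounded in norm: applying the definition of almost periodicity with $\ee=1$ produces $N(1)$ such that, for any $n\in\bZ$, the window $\{-n,-n+1,\dots,-n+N(1)-1\}$ contains some $T$ with $\Vert Y_{m+T}-Y_m\Vert<1$ for all $m\in\bZ$; taking $m=n$ and using $n+T\in\{0,\dots,N(1)-1\}$ gives $\Vert Y_n\Vert\le \max_{0\le j<N(1)}\Vert Y_j\Vert+1$, so $\sup_n\Vert Y_n\Vert<\infty$. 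Applying formula (\ref{ak}) to the shifted sequence $\{Y_{n+1}\}_{n\in\bN_0}$ and reindexing by $j=n+1$ then gives
$$
\lim_{N\to\infty}\frac{1}{N}\sum_{n=1}^{N} Y_{n+1}\,e(-n\lambda_k)=e(\lambda_k)\lim_{N\to\infty}\frac{1}{N}\sum_{j=2}^{N+1}Y_j\,e(-j\lambda_k)=e(\lambda_k)A_k,
$$
because $\frac{1}{N}\sum_{j=2}^{N+1}Y_je(-j\lambda_k)-\frac{1}{N}\sum_{j=1}^{N}Y_je(-j\lambda_k)=\frac{1}{N}(Y_{N+1}e(-(N+1)\lambda_k)-Y_1e(-\lambda_k))\to 0$ in $L^2$ by the bound just shown. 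Hence the Fourier coefficients of $\{Y_{n+1}\}_{n\in\bN_0}$ are $\{e(\lambda_k)A_k\}_k$, with no new frequencies.

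Next I would transfer this to the level of distributions. For each fixed $N$ and $k$, the Ces\`aro sum $\frac{1}{N}\sum_{n=1}^{N}Y_ne(-n\lambda_k)$ is a fixed complex-linear image of $(Y_1,\dots,Y_N)$; hence the joint law of any finite collection of the coefficients $A_k=\tilde A^{(m)}_\ell$ is an $L^2$-limit, therefore a distributional limit, of pushforwards of the law of $(Y_1,\dots,Y_N)$, and so depends only on the law of $\{Y_n\}_{n\in\bN_0}$. The identical argument applied to $\{Y_{n+1}\}_{n\in\bN_0}$, whose coefficients were just identified as $e(\ell/p^m)\tilde A^{(m)}_\ell$, shows that their joint law depends only on the law of $\{Y_{n+1}\}_{n\in\bN_0}$. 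Since by hypothesis $\{Y_{n+1}\}_{n\in\bN_0}\dd\{Y_n\}_{n\in\bN_0}$, we obtain $\{\tilde A^{(m)}_\ell\}_{m\in\bN,0<\ell<p^m,p\nmid\ell}\dd\{e(\ell/p^m)\tilde A^{(m)}_\ell\}_{m\in\bN,0<\ell<p^m,p\nmid\ell}$, which is exactly (\ref{e:rotation}).

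For the orthogonality, fix two distinct index pairs $(m,\ell)\ne(m',\ell')$, and note that $\ell/p^m$ and $\ell'/p^{m'}$ are then distinct modulo $1$. The coefficients lie in $L^2$, so $\tilde A^{(m)}_\ell\,\overline{\tilde A^{(m')}_{\ell'}}$ is integrable, and comparing the expectation of this product on the two sides of (\ref{e:rotation}) yields
$$
\bE(\tilde A^{(m)}_\ell\,\overline{\tilde A^{(m')}_{\ell'}})=e\!\left(\frac{\ell}{p^m}-\frac{\ell'}{p^{m'}}\right)\bE(\tilde A^{(m)}_\ell\,\overline{\tilde A^{(m')}_{\ell'}}).
$$
Since the prefactor differs from $1$, the covariance must vanish, which is the claimed orthogonality.

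I expect the one genuinely delicate point to be the middle step --- justifying that equality in distribution of the two processes descends to equality in distribution of the coefficient families, which are only defined as $L^2$-limits. The norm boundedness of almost periodic sequences and the vanishing of the two boundary terms in the reindexing are the small technical inputs that make this rigorous; the rest is formal.
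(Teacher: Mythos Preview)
Your proof is correct and follows essentially the same approach as the paper: compute the Fourier coefficients of the shifted sequence via (\ref{ak}) to obtain $A_k'=e(\lambda_k)A_k$, use stationarity to pass to equality in distribution of the coefficient families, and then read off orthogonality from the rotation invariance. Your treatment is in fact a bit more careful than the paper's in justifying why distributional equality of the processes descends to the $L^2$-limit coefficients, and your orthogonality step (a single application of (\ref{e:rotation}) giving $(1-e(\ell/p^m-\ell'/p^{m'}))\bE(\tilde A^{(m)}_\ell\overline{\tilde A^{(m')}_{\ell'}})=0$) is slightly more direct than the paper's averaging over $k=0,\dots,p^{m\vee m'}-1$, but the substance is the same.
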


\begin{proof}
%Assume (\ref{e:rotation}) holds, then
%\[
%\{Y_n\}_{n\in\bN_0}\dd   \left\{\sum_{m\in\bN}\sum_{0<l<p^m,\ p\nmid l} \tilde A^{(m)}_le\left({\frac{l}{p^m}}\right)e\left(\frac{nl}{p^m}\right)\right\}_{n\in\bN_0}  \dd\{Y_{n+1}\}_{n\in\bN_0},
%\]
%Hence $\{Y_n\}_{n\in\bN_0}$ is stationary.

Assume $\{Y_n\}_{n\in\bN_0}$ is stationary. Since the process $\{Y_{n+1}\}_{n\in\bN_0}$ is also almost periodic and in $L^2(\Omega, \mathcal F, \mathbb P)$, it is associated with a Fourier series as well. The coefficient $A_k'$ corresponding to $\lambda_k$ is given by
\begin{align*}
A_k'&=\lim_{N\to\infty}\frac{1}{N}\sum_{n=1}^N Y_{n+1}e(-n\lambda_k)\\
&=\lim_{N\to\infty}\frac{1}{N}\sum_{n=2}^{N+1} Y_{n}e(-(n-1)\lambda_k)\\
&=e(\lambda_k)A_k.
\end{align*}

As $\{Y_n\}_{n\in\bN_0}\dd \{Y_{n+1}\}_{n\in\bN_0}$, by the uniqueness of the associated Fourier series, the coefficients of the corresponding terms must also have the same distribution. Hence (\ref{e:rotation}) holds.

Furthermore, for $i=1,2$, let $m_i\in\bN$, $\ell_i$ be such that $0<\ell_i<p^{m_i}$ and $p\nmid \ell_i$. If $(m_1, \ell_1)\neq (m_2, \ell_2)$, then
$$
\sum_{k=0}^{p^{m_1\vee m_2}-1}e\left(\frac{k\ell_1}{p^{m_1}}-\frac{k\ell_2}{p^{m_2}}\right)=0.
$$
Hence by the rotation-invariance that we just proved,
\begin{align*}
0&=\bE\left(\sum_{k=0}^{p^{m_1\vee m_2}-1}e\left(\frac{k\ell_1}{p^{m_1}}\right)\tilde A^{(m_1)}_{\ell_1}\overline{e\left(\frac{k\ell_2}{p^{m_2}}\right)\tilde A^{(m_2)}_{\ell_2}}\right)\\
&=p^{m_1\vee m_2}\bE\left(\tilde A_{\ell_1}^{(m_1)}\overline{\tilde A^{(m_2)}_{\ell_2}}\right).
\end{align*}
Thus, $\tilde A^{(m_1)}_{\ell_1}$ and $\tilde A^{(m_2)}_{\ell_2}$ are orthogonal.
\end{proof}

Lemma \ref{l:Akdescription} also allows us to directly rewrite the representation (\ref{e:fouriergeneral}) as
$$
X_n\sim A_1+\sum_{m=1}^\infty\sum_{0<\ell<p^m, p\nmid \ell}A^{(m)}_\ell e\left(\frac{n\ell}{p^m}\right), \quad n\in\bN_0,
$$
where $A_1$ is the coefficient corresponding to $\lambda_1=0$, \textit{i.e.}, the constant term. As a result, Lemma \ref{l:stationary} has the following simple corollary for processes with stationary increments.

\begin{corollary}\label{c:Akstationary}
Let $\{X_n\}_{n\in\bN_0}$ be an almost periodic process in $L^2(\Omega, \mathcal F, \mathbb P)$ with the representation
\[
X_n\sim A_1+\sum_{m=1}^\infty\sum_{0<\ell<p^m, p\nmid \ell}A^{(m)}_\ell e\left(\frac{n\ell}{p^m}\right), \quad n\in\bN_0.
\]
If $\{X_n\}_{n\in\bN_0}$ has stationary increments, then
\begin{equation}\label{e:Akrotation}
\left\{A^{(m)}_\ell\right\}_{m\in\bN,0<\ell<p^m,p\nmid \ell}\dd\left\{e\left(\frac{\ell}{p^m}\right)A^{(m)}_\ell\right\}_{m\in\bN,0<\ell<p^m,p\nmid \ell},
\end{equation}
in particular, $\left\{A^{(m)}_\ell\right\}_{m\in\bN,0<\ell<p^m,p\nmid \ell}$ is an orthogonal sequence in $L^2(\Omega, \mathcal F, \mathbb P)$.
\end{corollary}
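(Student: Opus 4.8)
The plan is to deduce everything from Lemma \ref{l:stationary} applied to the increment process. First I would set $Y_n := X_{n+1}-X_n$. By hypothesis $\{X_n\}_{n\in\bN_0}$ has stationary increments, so $\{Y_n\}_{n\in\bN_0}$ is stationary; and it is almost periodic in $L^2(\Omega, \mathcal F, \mathbb P)$, since for any near-period $T$ of $\{X_n\}$ with tolerance $\ee/2$ one has, by the triangle inequality,
\[
\left(\bE|Y_{n+T}-Y_n|^2\right)^{1/2}\le\left(\bE|X_{n+T+1}-X_{n+1}|^2\right)^{1/2}+\left(\bE|X_{n+T}-X_n|^2\right)^{1/2}<\ee
\]
uniformly in $n$ (this is essentially the estimate already used in the proof of Lemma \ref{l:incrementAk}). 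Next I would compute the Fourier coefficients of $\{Y_n\}$: repeating the short Abel-summation computation from the proof of Lemma \ref{l:incrementAk} — which uses only that $\sup_n\bE(X_n^2)<\infty$, a consequence of almost periodicity, and not any self-similarity — the coefficient of $\{Y_n\}$ at frequency $\lambda_k$ equals $A_k(e(\lambda_k)-1)$. In the $p$-adic indexing this says that the constant term (frequency $0$) of $\{Y_n\}$ vanishes, while for $m\in\bN$, $0<\ell<p^m$, $p\nmid\ell$ the coefficient is $\tilde A^{(m)}_\ell:=c^{(m)}_\ell A^{(m)}_\ell$ with $c^{(m)}_\ell:=e(\ell/p^m)-1$. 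The point to record is that $c^{(m)}_\ell\neq 0$, because $\ell/p^m\notin\bZ$ when $0<\ell<p^m$.

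Thus $\{Y_n\}_{n\in\bN_0}$ is a stationary almost periodic process in $L^2(\Omega, \mathcal F, \mathbb P)$ whose Fourier series is $\sum_{m\in\bN}\sum_{0<\ell<p^m,\,p\nmid\ell}\tilde A^{(m)}_\ell e(n\ell/p^m)$, exactly the form required in Lemma \ref{l:stationary}. Applying that lemma gives
\[
\{\tilde A^{(m)}_\ell\}_{m\in\bN,0<\ell<p^m,p\nmid\ell}\dd\{e(\ell/p^m)\tilde A^{(m)}_\ell\}_{m\in\bN,0<\ell<p^m,p\nmid\ell}
\]
together with the orthogonality of the family $\{\tilde A^{(m)}_\ell\}$. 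To conclude I would divide out the nonzero constants: substituting $\tilde A^{(m)}_\ell=c^{(m)}_\ell A^{(m)}_\ell$ and applying the fixed deterministic coordinatewise map $(z^{(m)}_\ell)\mapsto((c^{(m)}_\ell)^{-1}z^{(m)}_\ell)$ to both sides — which preserves equality in distribution since it acts coordinatewise and commutes with multiplication by the scalars $e(\ell/p^m)$ — yields precisely (\ref{e:Akrotation}). For the orthogonality statement one may either rerun the geometric-sum argument from the proof of Lemma \ref{l:stationary} with the $A^{(m)}_\ell$ in place of the $\tilde A^{(m)}_\ell$, or simply note that $\bE(A^{(m_1)}_{\ell_1}\overline{A^{(m_2)}_{\ell_2}})=(c^{(m_1)}_{\ell_1}\overline{c^{(m_2)}_{\ell_2}})^{-1}\bE(\tilde A^{(m_1)}_{\ell_1}\overline{\tilde A^{(m_2)}_{\ell_2}})$, so orthogonality transfers verbatim from $\{\tilde A^{(m)}_\ell\}$.

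Being a ``simple corollary'', this argument has no genuine obstacle; the only things requiring attention are bookkeeping checks: that the coefficient identity of Lemma \ref{l:incrementAk} does not tacitly use self-similarity (it does not, only almost periodicity), that the constant term really drops out so that we land in the no-constant-term hypothesis of Lemma \ref{l:stationary}, and that a coordinatewise deterministic rescaling preserves equality in distribution of the (possibly infinite) family — which reduces to the corresponding statement for each finite-dimensional marginal.
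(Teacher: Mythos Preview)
Your proposal is correct and follows essentially the same approach as the paper: the paper's proof is the single sentence ``trivial by noticing that $A^{(m)}_\ell$ and $\tilde A^{(m)}_{\ell}$ are different only by a deterministic multiplicative factor,'' and you have spelled out precisely how that observation, together with Lemma~\ref{l:stationary} applied to the increment process, yields the result. Your care in verifying that the coefficient computation of Lemma~\ref{l:incrementAk} uses only almost periodicity (not self-similarity) is appropriate, since Corollary~\ref{c:Akstationary} is stated for general almost periodic processes.
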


The proof of this corollary is trivial by noticing that $A^{(m)}_\ell$ and $\tilde A^{(m)}_{\ell}$ are different only by a deterministic multiplicative factor.

We have seen how the stationarity of the increments has an impact on the coefficients for the increment process and therefore, also on the coefficients for the original process. Next, we discuss an impact of the self-similarity to the coefficients in the representation.

\begin{lemma}\label{l:Akselfsimilar}
Let $\{X_n\}_{n\in\bN_0}$ be an almost periodic process with $\bE(X_1^2)<\infty$ and the representation
\[
X_n\sim A_1+\sum_{m=1}^\infty\sum_{0<\ell<p^m, p\nmid \ell}A^{(m)}_\ell e\left(\frac{n\ell}{p^m}\right), \quad n\in\bN_0.
\]
If $\{X_n\}_{n\in\bN_0}$ is discrete-time self-similar with scaling function $b(n)=(|n|_p)^H$, $H>0$, then
\begin{equation}
\left\{p^{-H}A_\ell^{(m)}\right\}_{m\in\bN, 0<\ell<p^m, p\nmid \ell}\dd \left\{\sum_{t=0}^{p-1}A_{tp^m+\ell}^{(m+1)}\right\}_{m\in\bN, 0<\ell<p^m, p\nmid \ell}.\label{p}
\end{equation}
\end{lemma}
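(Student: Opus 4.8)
The plan is to exploit self-similarity only at the single scale $n=p$, turn it into a distributional identity between two almost periodic processes, compute the Fourier coefficients of one of them in terms of the $A^{(m)}_\ell$, and then read off (\ref{p}) just as the rotation identities were read off in the proof of Lemma \ref{l:stationary}. Since $b(p)=(|p|_p)^H=p^{-H}$, self-similarity gives
\[
\{X_{pn}\}_{n\in\bN_0}\dd\{p^{-H}X_n\}_{n\in\bN_0}.
\]
The process $\{p^{-H}X_n\}$ is a deterministic multiple of $\{X_n\}$, hence almost periodic, and by (\ref{ak}) its Fourier coefficient at the frequency $\ell/p^m$ is $p^{-H}A^{(m)}_\ell$ (vanishing at every frequency not occurring in the representation assumed in the hypothesis). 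The subsampled process $\{X_{pn}\}$ is almost periodic too: if $\{X'_n\}_{n\in\bZ}$ is an almost periodic extension of $\{X_n\}$ and $T$ is an $(\ee/p)$-almost period of it, then summing the defining inequality along $T,2T,\dots,pT$ shows that $pT$ is an $\ee$-almost period of $\{X'_n\}$, hence $T$ is an $\ee$-almost period of $\{X'_{pn}\}_{n\in\bZ}$; thus $\{X'_{pn}\}$ is an almost periodic extension of $\{X_{pn}\}$.

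Next I would compute the Fourier coefficients of $\{X_{pn}\}$ directly from (\ref{ak}). Write $a(\mu)$ for the Fourier coefficient of $\{X_n\}$ at a frequency $\mu$, so that $a(\ell/p^m)=A^{(m)}_\ell$, $a(0)=A_1$, and $a(\mu)=0$ otherwise. Fix $\lambda\in[0,1)$; using $\bone_{\{p\mid k\}}=\frac1p\sum_{j=0}^{p-1}e(jk/p)$ and re-indexing $k=pn$,
\[
\frac1N\sum_{n=1}^N X_{pn}\,e(-n\lambda)=\sum_{j=0}^{p-1}\frac{1}{pN}\sum_{k=1}^{pN}X_k\,e\!\left(-k\,\frac{\lambda-j}{p}\right),
\]
and as $N\to\infty$ each inner average converges in $L^2$ (by almost periodicity of $\{X_n\}$) to $a\big((\lambda-j)/p\big)$, so the Fourier coefficient of $\{X_{pn}\}$ at $\lambda$ equals $\sum_{j=0}^{p-1}a\big((\lambda-j)/p\big)$. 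Taking $\lambda=\ell/p^m$ with $m\in\bN$, $0<\ell<p^m$, $p\nmid\ell$: the term $j=0$ is $a(\ell/p^{m+1})=A^{(m+1)}_\ell$, while for $1\le j\le p-1$ the number $(\ell-jp^m)/p^{m+1}$ is congruent modulo $1$ to $(\ell+(p-j)p^m)/p^{m+1}$, whose numerator $\ell+(p-j)p^m$ lies in $(0,p^{m+1})$ and is coprime to $p$, so that term is $A^{(m+1)}_{\ell+(p-j)p^m}$. As $j$ runs over $\{0,1,\dots,p-1\}$ the index $t:=p-j\bmod p$ runs over $\{0,1,\dots,p-1\}$, whence the Fourier coefficient of $\{X_{pn}\}$ at $\ell/p^m$ is $\sum_{t=0}^{p-1}A^{(m+1)}_{\ell+tp^m}$.

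Finally, the Fourier coefficients of an almost periodic process are, frequency by frequency, $L^2$-limits of finite linear combinations of its values, so the joint law of any finite family of them is determined by the finite-dimensional distributions of the process; hence $\{X_{pn}\}\dd\{p^{-H}X_n\}$ forces the two coefficient families -- both indexed by $\{(m,\ell):m\in\bN,\,0<\ell<p^m,\,p\nmid\ell\}$ -- to have the same distribution, which on inserting the two formulas above is exactly (\ref{p}). I expect the one genuine point requiring care is the index bookkeeping in the second paragraph: reducing $(\ell-jp^m)/p^{m+1}$ modulo $1$, checking each resulting numerator lies in $(0,p^{m+1})$ and is coprime to $p$, and verifying that these numerators exhaust $\{\ell+tp^m:0\le t\le p-1\}$ as $j$ ranges over $\{0,\dots,p-1\}$; the rest is routine $L^2$ bookkeeping or a reuse of the uniqueness-of-Fourier-series argument already used in this section.
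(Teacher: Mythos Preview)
Your proof is correct and is essentially the paper's argument reorganized: both amount to showing that the Fourier coefficient of $\{X_{pn}\}$ at frequency $\ell/p^m$ equals $\sum_{t=0}^{p-1}A^{(m+1)}_{tp^m+\ell}$ via a character-sum identity, then invoking $\{X_{pn}\}\dd\{p^{-H}X_n\}$ and the fact that Fourier coefficients are $L^2$-limits of linear functionals of the process. The paper starts from $\sum_t A^{(m+1)}_{tp^m+\ell}$, expands via (\ref{ak}), and uses $\sum_t e(-jt/p)=p\,\bone_{\{p\mid j\}}$ to collapse to the coefficient of $\{X_{pn}\}$, whereas you go the other direction using $\bone_{\{p\mid k\}}=\tfrac1p\sum_j e(jk/p)$; the computations are dual and the bookkeeping you flag as the delicate point is exactly the step the paper handles by the observation that the exponential sum over $t$ is nonzero only when $p\mid j$.
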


\begin{proof}
For any $m\in\bN$ and $\ell$ satisfying $0<\ell<p^m, p\nmid \ell$,
\begin{align*}
&~\sum_{t=0}^{p-1}A^{(m+1)}_{tp^{m}+\ell}\\
=&~\lim_{N\to\infty}\frac{1}{Np^{m+1}}\sum_{t=0}^{p-1}\sum_{n=1}^{Np^{m+1}}e\left(-\frac{n(tp^m+\ell)}{p^{m+1}}\right)X_n\\
=&~\lim_{N\to\infty}\frac{1}{Np^{m+1}}\sum_{t=0}^{p-1}\sum_{k=0}^{N-1}\sum_{j=1}^{p^{m+1}}e\left(-\frac{(kp^{m+1}+j)(tp^m+\ell)}{p^{m+1}}\right)X_{kp^{m+1}+j}\\
=&~\lim_{N\to\infty}\frac{1}{Np^{m+1}}\sum_{j=1}^{p^{m+1}}\sum_{t=0}^{p-1}e\left(-\frac{j(tp^m+\ell)}{p^{m+1}}\right)\sum_{k=0}^{N-1}X_{kp^{m+1}+j}.
\end{align*}

Note that the summation
$$
\sum_{t=0}^{p-1}e\left(-\frac{j(tp^m+\ell)}{p^{m+1}}\right)
$$
is non-zero only if $p|j$, in which case it takes value $pe\left(-\frac{j\ell}{p^{m+1}}\right)$. Therefore, by letting $j'=j/p$, we have
$$
\sum_{t=0}^{p-1}A^{(m+1)}_{tp^{m}+\ell}=\lim_{N\to\infty}\frac{1}{Np^{m}}\sum_{j'=1}^{p^{m}}e\left(-\frac{j'\ell}{p^{m}}\right)\sum_{k=0}^{N-1}X_{kp^{m+1}+pj'}.
$$

Recall that
$$
\{X_{pn}\}_{n\in\bN_0}\dd p^{-H}\{X_n\}_{n\in\bN_0},
$$
hence
\begin{align*}
&~\left\{\sum_{t=0}^{p-1}A^{(m+1)}_{tp^{m}+\ell}\right\}_{m\in\bN, 0<\ell<p^m, p\nmid \ell}\\
\dd &~\left\{\lim_{N\to\infty}\frac{p^{-H}}{Np^{m}}\sum_{j'=1}^{p^{m}}e\left(-\frac{j'\ell}{p^{m}}\right)\sum_{k=0}^{N-1}X_{kp^{m}+j'}\right\}_{m\in\bN, 0<\ell<p^m, p\nmid \ell}\\
= &~\left\{p^{-H}\lim_{N\to\infty}\frac{1}{Np^{m}}\sum_{k=0}^{N-1}\sum_{j'=1}^{p^{m}}e\left(-\frac{(kp^m+j')\ell}{p^{m}}\right)X_{kp^{m}+j'}\right\}_{m\in\bN, 0<\ell<p^m, p\nmid \ell}\\
= &~\left\{p^{-H}A_\ell^{(m)}\right\}_{m\in\bN, 0<\ell<p^m, p\nmid \ell}.
\end{align*}
\end{proof}

Corollary \ref{c:Akstationary} and Lemma \ref{l:Akselfsimilar} together guarantee a very important result: the convergence of the Fourier series associated with a dt-ss-si process of type II in $L^2(\Omega, \mathcal F, \mathbb P)$.

\begin{proposition}\label{p:convergence}
Let $\{A^{(m)}_\ell\}_{m\in\bN,0<\ell<p^m,p\nmid \ell}$ be an orthogonal sequence in $L^2(\Omega, \mathcal F, \mathbb P)$ satisfying (\ref{p}). Then the Fourier series
$$
\sum_{m=1}^\infty\sum_{0<\ell<p^m,\ p\nmid \ell}A^{(m)}_\ell e\left(\frac{n\ell}{p^m}\right)
$$
converges uniformly in $L^2(\Omega, \mathcal F, \mathbb P)$.
\end{proposition}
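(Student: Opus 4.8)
The plan is to reduce the uniform $L^2$ convergence of the Fourier series to the summability of the second moments $\bE(|A^{(m)}_\ell|^2)$, and then to extract a geometric decay of these second moments across the layers indexed by $m$ directly from property (\ref{p}).

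First I would note that since $\{A^{(m)}_\ell\}_{m\in\bN,\,0<\ell<p^m,\,p\nmid\ell}$ is an orthogonal sequence and $|e(n\ell/p^m)|=1$, for any finite set $F$ of admissible pairs $(m,\ell)$ one has
$$
\bE\left(\left|\sum_{(m,\ell)\in F}A^{(m)}_\ell e\left(\frac{n\ell}{p^m}\right)\right|^2\right)=\sum_{(m,\ell)\in F}\bE\left(|A^{(m)}_\ell|^2\right),
$$
which does not depend on $n$. Consequently the partial sums of the double series are uniformly Cauchy in $L^2(\Omega,\mathcal F,\bP)$ over $n\in\bN_0$ as soon as $\sum_{m\geq 1}\sum_{0<\ell<p^m,\,p\nmid\ell}\bE(|A^{(m)}_\ell|^2)<\infty$, and this is precisely uniform convergence in $L^2$. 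So it suffices to establish this summability.

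Next, set $S_m:=\sum_{0<\ell<p^m,\,p\nmid\ell}\bE(|A^{(m)}_\ell|^2)$, and observe $S_1=\sum_{\ell=1}^{p-1}\bE(|A^{(1)}_\ell|^2)<\infty$ since each $A^{(1)}_\ell\in L^2$. I would then use (\ref{p}) coordinatewise: equality in distribution of the two sequences forces, for every admissible $(m,\ell)$,
$$
p^{-2H}\,\bE\left(|A^{(m)}_\ell|^2\right)=\bE\left(\left|\sum_{t=0}^{p-1}A^{(m+1)}_{tp^m+\ell}\right|^2\right)
$$
(this transfer is legitimate because every term lies in $L^2$, so $\bE|\cdot|^2$ is a genuine functional of the marginal law). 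For fixed $m\geq 1$ and $\ell$ with $0<\ell<p^m,\ p\nmid\ell$, the indices $tp^m+\ell$ for $t=0,\dots,p-1$ are distinct, lie in $(0,p^{m+1})$, and are not divisible by $p$ (as $p\mid tp^m$ and $p\nmid\ell$); hence the corresponding $A^{(m+1)}_{tp^m+\ell}$ are pairwise orthogonal, so the right-hand side equals $\sum_{t=0}^{p-1}\bE(|A^{(m+1)}_{tp^m+\ell}|^2)$. Summing over admissible $\ell$ and using that $(t,\ell)\mapsto tp^m+\ell$ is a bijection from $\{0,\dots,p-1\}\times\{\ell:0<\ell<p^m,\,p\nmid\ell\}$ onto $\{j:0<j<p^{m+1},\,p\nmid j\}$ yields the recursion $p^{-2H}S_m=S_{m+1}$.

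Finally, since $H>0$ gives $p^{-2H}<1$, we get $S_m=p^{-2H(m-1)}S_1$ and therefore $\sum_{m\geq 1}S_m=S_1/(1-p^{-2H})<\infty$, which together with the first step gives the claimed uniform convergence. I do not expect a genuine obstacle: the only points needing care are the passage from the distributional identity (\ref{p}) to the equality of single-coordinate second moments, and the index bookkeeping in the bijection used to pass from $S_m$ to $S_{m+1}$; the rest is a routine geometric estimate.
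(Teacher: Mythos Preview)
Your proposal is correct and follows essentially the same route as the paper: reduce via orthogonality to the summability of the layer sums $S_m=\sum_{0<\ell<p^m,\,p\nmid\ell}\bE(|A^{(m)}_\ell|^2)$, then use (\ref{p}) together with orthogonality of the $A^{(m+1)}_{tp^m+\ell}$ to derive the recursion $S_{m+1}=p^{-2H}S_m$ and conclude by a geometric sum. The index bookkeeping and the passage from the distributional identity to second moments are exactly as in the paper's argument.
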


\begin{proof}
Orthogonality implies that
$$
\bE\left(\left|\sum_{m=M}^N\sum_{0<\ell<p^m,\ p\nmid \ell}A^{(m)}_\ell e\left(\frac{n\ell}{p^m}\right)\right|^2\right)=\sum_{m=M}^N\sum_{0<\ell<p^m,\ p\nmid \ell}\bE(|A^{(m)}_\ell|^2).
$$
On the other hand, (\ref{p}), together with the orthogonality, also gives
\begin{align*}
\sum_{0<\ell<p^{m+1},\ p\nmid \ell}\bE(|A^{(m+1)}_\ell|^2)&=\sum_{\ell=1}^{p^m}\bone_{p\nmid \ell}\sum_{t=0}^{p-1}\bE(|A^{(m+1)}_{tp^m+\ell}|^2)\\
&=\sum_{\ell=1}^{p^m}\bone_{p\nmid \ell}\bE(|\sum_{t=0}^{p-1}A^{(m+1)}_{tp^m+\ell}|^2)\\
&=\sum_{\ell=1}^{p^m-1}\bone_{p\nmid \ell}p^{-2H}\bE(|A^{(m)}_\ell|^2).
\end{align*}
Hence by induction,
\[
\sum_{0<\ell<p^{m+1},\ p\nmid \ell}\bE(|A^{(m+1)}_\ell|^2)=p^{-2mH}\sum_{\ell=1}^{p-1}\bE(|A^{(1)}_\ell|^2).
\]
Thus,
\[
\bE\left(\left|\sum_{m=M}^N\sum_{0<\ell<p^m,\ p\nmid \ell}A^{(m)}_\ell e\left(\frac{n\ell}{p^m}\right)\right|^2\right)\leq \frac{p^{-2(M-1)H}-p^{-2NH}}{p^{2H}-1}\sum_{\ell=1}^{p-1}\bE(|A^{(1)}_\ell|^2)
\]
which converges uniformly to 0 as $M, N\to\infty$. Hence the Fourier series converges uniformly in $L^2(\Omega, \mathcal F, \mathbb P)$.
\end{proof}

As a direct consequence of Proposition \ref{p:convergence}, all the Fourier series discussed in this section converge and hence are equal to the original sequences. In other words, the ``$\sim$'' can be now replaced by ``=''. This allows us to easily expand Corollary \ref{c:Akstationary} to a two-directional result.

\begin{proposition}\label{p:stationary}
Let $\{X_n\}_{n\in\bN_0}$ be an almost periodic process in $L^2(\Omega, \mathcal F, \mathbb P)$ with the representation
\[
X_n= A_1+\sum_{m=1}^\infty\sum_{0<\ell<p^m, p\nmid \ell}A^{(m)}_\ell e\left(\frac{n\ell}{p^m}\right), \quad n\in\bN_0.
\]
Then $\{X_n\}_{n\in\bN_0}$ has stationary increments if and only if (\ref{e:Akrotation}) holds.
\end{proposition}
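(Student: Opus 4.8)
One implication is already Corollary \ref{c:Akstationary} (note that the coefficients appearing in the assumed $L^2$-convergent representation are necessarily the Fourier coefficients of $\{X_n\}_{n\in\bN_0}$ by uniqueness of the almost periodic Fourier expansion, so the corollary applies), so the plan is to prove the converse: assuming (\ref{e:Akrotation}), I want to deduce that the increment process $\{Y_n\}_{n\in\bN_0}$, $Y_n:=X_{n+1}-X_n$, is stationary. The first step is to pass to the coefficients of the increment process. Setting $B^{(m)}_\ell:=A^{(m)}_\ell\big(e(\ell/p^m)-1\big)$ and subtracting the two $L^2$-convergent series representing $X_{n+1}$ and $X_n$ termwise (the constant $A_1$ cancels), one gets the $L^2$-convergent representation
\[
Y_n=\sum_{m=1}^\infty\sum_{0<\ell<p^m,\ p\nmid\ell}B^{(m)}_\ell\, e\left(\frac{n\ell}{p^m}\right),\qquad n\in\bN_0 .
\]
Applying to both sides of (\ref{e:Akrotation}) the deterministic coordinatewise map $(z^{(m)}_\ell)_{m,\ell}\mapsto\big((e(\ell/p^m)-1)z^{(m)}_\ell\big)_{m,\ell}$, which preserves equality in distribution, yields
\[
\big\{B^{(m)}_\ell\big\}_{m\in\bN,\,0<\ell<p^m,\,p\nmid\ell}\ \dd\ \left\{e\left(\frac{\ell}{p^m}\right)B^{(m)}_\ell\right\}_{m\in\bN,\,0<\ell<p^m,\,p\nmid\ell}.
\]

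Next I would invoke the elementary principle that the law on $\bR^{\bN_0}$ of a process defined by such a series depends only on the joint law of its coefficients: for any family $\{C^{(m)}_\ell\}_{m\in\bN,\,0<\ell<p^m,\,p\nmid\ell}$ of $L^2$ random variables for which $\sum_{m,\ell}C^{(m)}_\ell e(n\ell/p^m)$ converges in $L^2$ for every $n$, and for any finite set $n_1,\dots,n_j\in\bN_0$, the partial-sum vector $\big(\sum_{m\le N,\ell}C^{(m)}_\ell e(n_i\ell/p^m)\big)_{i=1}^j$ is a fixed continuous (indeed linear) function of the finitely many coefficients with $m\le N$, and it converges in $L^2$, hence in distribution, as $N\to\infty$; thus every finite-dimensional distribution of the limit process is determined by the law of $\{C^{(m)}_\ell\}$. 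Applying this with $\{C^{(m)}_\ell\}=\{B^{(m)}_\ell\}$ produces the law of $\{Y_n\}_{n\in\bN_0}$, while applying it with $\{C^{(m)}_\ell\}=\{e(\ell/p^m)B^{(m)}_\ell\}$ produces the law of $\{Y_{n+1}\}_{n\in\bN_0}$, since $\sum_{m,\ell}e(\ell/p^m)B^{(m)}_\ell e(n\ell/p^m)=\sum_{m,\ell}B^{(m)}_\ell e((n+1)\ell/p^m)=Y_{n+1}$, the last series converging in $L^2$ by the displayed representation of $\{Y_n\}_{n\in\bN_0}$ evaluated at $n+1$. Because the two coefficient families are equal in distribution, this gives $\{Y_n\}_{n\in\bN_0}\dd\{Y_{n+1}\}_{n\in\bN_0}$.

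Finally, $\{Y_n\}_{n\in\bN_0}\dd\{Y_{n+1}\}_{n\in\bN_0}$ says that the law of $\{Y_n\}_{n\in\bN_0}$ on $\bR^{\bN_0}$ is invariant under the one-step shift; iterating, it is invariant under the $k$-step shift for every $k\in\bN$, which is exactly the statement that $\{Y_n\}_{n\in\bN_0}$ is stationary, i.e.\ that $\{X_n\}_{n\in\bN_0}$ has stationary increments. I expect the only genuinely delicate point to be the middle step, namely justifying that the law of the series process is a function of the law of its coefficients; I would handle it exactly as indicated, one finite-dimensional marginal at a time, using that $L^2$-convergence implies convergence in distribution together with the fact that each partial sum is an explicit continuous function of finitely many coefficients. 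Everything else is bookkeeping.
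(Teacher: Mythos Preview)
Your proposal is correct and follows essentially the same route as the paper: reduce the ``only if'' direction to Corollary \ref{c:Akstationary}, pass to the increment coefficients $B^{(m)}_\ell=A^{(m)}_\ell(e(\ell/p^m)-1)$, transfer the rotation-invariance (\ref{e:Akrotation}) to the $B^{(m)}_\ell$, and read off $\{Y_n\}\dd\{Y_{n+1}\}$. The paper dispatches the last implication with ``it is obvious that'', whereas you spell out explicitly---via finite partial sums and $L^2$-convergence implying convergence in distribution---why the law of the series process is determined by the joint law of its coefficients; this extra care is justified but does not constitute a different argument.
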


\begin{proof}
The ``only if'' part is exactly Corollary \ref{c:Akstationary}. For the ``if'' part, note that for the increment process $\{\tilde X_n\}_{n\in\bN_0}$, we have
$$
\tilde X_n=\sum_{m=1}^\infty\sum_{0<\ell<p^m,\ p\nmid \ell} \tilde A^{(m)}_\ell e\left(\frac{n\ell}{p^m}\right), \quad n\in\bN_0,
$$
where $\tilde A^{(m)}_\ell=A^{(m)}_\ell(e(\ell/p^m)-1)$. Because of the relation between $\tilde A^{(m)}_\ell$ and $A^{(m)}_\ell$, (\ref{e:Akrotation}) is equivalent to
$$
\left\{\tilde A^{(m)}_\ell\right\}_{m\in\bN,0<\ell<p^m,p\nmid \ell}\dd\left\{e\left(\frac{\ell}{p^m}\right)\tilde A^{(m)}_\ell\right\}_{m\in\bN,0<\ell<p^m,p\nmid \ell}.
$$
With this condition, it is obvious that
$$
\{\tilde X_n\}_{n\in\bN_0}\dd   \left\{\sum_{m=1}^\infty\sum_{0<\ell<p^m,\ p\nmid \ell} \tilde A^{(m)}_\ell e\left({\frac{\ell}{p^m}}\right)e\left(\frac{n\ell}{p^m}\right)\right\}  =\{\tilde X_{n+1}\}_{n\in\bN_0}.
$$
\end{proof}

Let $\{X_n\}_{n\in\bN_0}$ be a dt-ss-si process of type II with representation
$$
X_n= A_1+\sum_{m=1}^\infty\sum_{0<\ell<p^m, p\nmid \ell}A^{(m)}_\ell e\left(\frac{n\ell}{p^m}\right), \quad n\in\bN_0.
$$
Since $X_0=0$ almost surely, we must have
$$
A_1=-\sum_{m=1}^\infty\sum_{0<\ell<p^m, p\nmid \ell}A^{(m)}_\ell.
$$
Thus, the representation can be rewritten as
\begin{equation}\label{e:representation}
X_n=\sum_{m=1}^\infty\sum_{0<\ell<p^m, p\nmid \ell}A^{(m)}_\ell \left(e\left(\frac{n\ell}{p^m}\right)-1\right), \quad n\in\bN_0.
\end{equation}

With Proposition \ref{p:convergence} and (\ref{e:representation}), Lemma \ref{l:Akselfsimilar} also gets a significant extension, which includes a condition corresponding to the rescaling invariance of the distribution of $\{X_n\}_{n\in\bN_0}$ with factor $q\in\mathcal P, q\neq p$, as well as the sufficiency of the conditions.

\begin{proposition}\label{p:selfsimilar}
Let $\{X_n\}_{n\in\bN_0}$ be an almost periodic process in $L^2(\Omega, \mathcal F, \mathbb P)$ with the representation
\begin{equation*}\label{e:sim}
X_n=\sum_{m=1}^\infty\sum_{0<\ell<p^m, p\nmid \ell}A^{(m)}_\ell \left(e\left(\frac{n\ell}{p^m}\right)-1\right), \quad n\in\bN_0.
\end{equation*}
Then $\{X_n\}_{n\in\bN_0}$ is discrete-time self-similar with scaling function $b(n)=(|n|_p)^H$ for $H>0$ if and only if (\ref{p}) holds, and
\begin{equation}\label{q}
\{A^{(m)}_\ell\}_{m\in\bN,0<\ell<p^m,p\nmid \ell}\dd \{A^{(m)}_{[q\ell]}\}_{m\in\bN,0<\ell<p^m,p\nmid \ell},
\end{equation}
where $[q\ell]$ is the residue of $q\ell$ modulo $p^m$.
\end{proposition}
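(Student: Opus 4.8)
The plan is to reduce discrete-time self-similarity with scaling function $b(n)=(|n|_p)^H$ to its effect under scaling by a single prime, and then to convert each such scaling into a statement about the Fourier coefficients $A^{(m)}_\ell$, using that an almost periodic $L^2$-sequence determines, and is determined by, its associated Fourier series (the relevant series converging by the representation hypothesis and by Proposition \ref{p:convergence}). Since $b$ is completely multiplicative with $b(p)=p^{-H}$ and $b(q)=1$ for every prime $q\neq p$, I would first observe that $\{X_n\}_{n\in\bN_0}$ is discrete-time self-similar with this scaling function if and only if
\[
\{X_{pn}\}_{n\in\bN_0}\dd p^{-H}\{X_n\}_{n\in\bN_0}\quad\text{and}\quad \{X_{qn}\}_{n\in\bN_0}\dd \{X_n\}_{n\in\bN_0}\ \text{ for every }q\in\cP\setminus\{p\}.
\]
The forward implication is immediate. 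For the converse one checks that such rescaling relations compose: restricting the relation for a factor $a$ to indices divisible by $c$ and then invoking the relation for $c$ (multiplied by the deterministic constant $b(a)$) yields the relation for $ac$, so an induction on the prime factorization of $n$ covers every $n\in\bN$ and pins the scaling function down to $(|n|_p)^H$.

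Next I would read off the Fourier coefficients of the scaled processes directly from the representation. Substituting $qn$ and replacing $q\ell$ by its residue modulo $p^m$ inside $e(\,\cdot\,)$ (valid because $n$ is an integer), then using that $\ell\mapsto[q\ell]$ is a bijection of $\{0<\ell<p^m:\ p\nmid\ell\}$, one obtains
\[
X_{qn}=\sum_{m=1}^\infty\sum_{0<\ell<p^m,\ p\nmid\ell}A^{(m)}_{[q^{-1}\ell]}\left(e\left(\frac{n\ell}{p^m}\right)-1\right),
\]
with $q^{-1}$ the inverse of $q$ modulo $p^m$; this series converges, being a within-layer permutation of the given one. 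Similarly, substituting $pn$, noting that the $m=1$ layer vanishes, and regrouping the $(m+1)$-st layer over residues modulo $p^m$ (each such index having the form $tp^m+\ell$, $0\le t\le p-1$, $0<\ell<p^m$, $p\nmid\ell$) gives
\[
X_{pn}=\sum_{m=1}^\infty\sum_{0<\ell<p^m,\ p\nmid\ell}\left(\sum_{t=0}^{p-1}A^{(m+1)}_{tp^m+\ell}\right)\left(e\left(\frac{n\ell}{p^m}\right)-1\right),
\]
again convergent. So the coefficient family of $\{X_{qn}\}_n$ is $(A^{(m)}_{[q^{-1}\ell]})$, that of $\{X_{pn}\}_n$ is $\bigl(\sum_{t=0}^{p-1}A^{(m+1)}_{tp^m+\ell}\bigr)$, and that of $p^{-H}\{X_n\}_n$ is $(p^{-H}A^{(m)}_\ell)$.

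Then I would match coefficient families. Two such series represent processes equal in distribution exactly when their coefficient families are: truncating at layer $M$ makes each finite-dimensional marginal a fixed linear functional of the jointly distributed truncated coefficients, so the truncations agree in law, and passing to the $L^2$-limit (which is the process) transfers this to the processes; conversely, equal-in-distribution processes have equal-in-distribution coefficient families, since the coefficients are recovered from the process by the deterministic formula (\ref{ak}). Hence $\{X_{pn}\}_n\dd p^{-H}\{X_n\}_n$ is equivalent to $\bigl(\sum_{t=0}^{p-1}A^{(m+1)}_{tp^m+\ell}\bigr)\dd(p^{-H}A^{(m)}_\ell)$, i.e. to (\ref{p}) (its necessity is also Lemma \ref{l:Akselfsimilar}), and $\{X_{qn}\}_n\dd\{X_n\}_n$ is equivalent to $(A^{(m)}_{[q^{-1}\ell]})\dd(A^{(m)}_\ell)$, which — applying the coordinate permutation $\ell\mapsto[q\ell]\bmod p^m$ to both sides — is equivalent to (\ref{q}). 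Combined with the reduction of the first paragraph, this yields the stated equivalence.

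I expect the only real difficulty to be care rather than ideas: turning the correspondence ``process $\leftrightarrow$ coefficient family'' into a genuine two-sided equivalence of laws (so that each process is an honest measurable functional of its convergent coefficient family, which is where the convergence from the representation hypothesis and Proposition \ref{p:convergence} enters), and correctly bookkeeping the index transformations $\ell\mapsto[q\ell]\bmod p^m$ across layers and the decomposition $tp^m+\ell$ of layer-$(m+1)$ indices in the $p$-scaling computation.
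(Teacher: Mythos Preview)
Your proposal is correct and follows essentially the same route as the paper: compute the Fourier coefficients of $\{X_{qn}\}$ and $\{X_{pn}\}$ by substitution and regrouping, then match them against those of $\{X_n\}$ and $p^{-H}\{X_n\}$ using the bijection between process laws and coefficient-family laws (the paper phrases the forward half as ``uniqueness of the Fourier expansion'' and cites Lemma \ref{l:Akselfsimilar} for (\ref{p})). Your write-up is arguably a bit more careful than the paper's in two places --- you spell out why checking prime scaling factors suffices, and you make explicit that the coefficients are deterministic $L^2$-limits of the process via (\ref{ak}) so that equality in law transfers --- but these are refinements of the same argument, not a different approach.
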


\begin{proof}
Assume $\{X_n\}_{n\in\bN_0}$ is a discrete-time self-similar process with scaling function $b(n)=(|n|_p)^H$ for $H>0$, then (\ref{p}) holds by Lemma \ref{l:Akselfsimilar}. Moreover, note that for $n\in\bN_0$,
\begin{align*}
X_{qn}&=\sum_{m=1}^\infty\sum_{0<\ell<p^m, p\nmid \ell}A^{(m)}_\ell \left(e\left(\frac{nq\ell}{p^m}\right)-1\right)\\
&=\sum_{m=1}^\infty\sum_{0<\ell<p^m, p\nmid \ell}A^{(m)}_\ell \left(e\left(\frac{n[q\ell]}{p^m}\right)-1\right).
\end{align*}

On the other hand, since $\{X_{qn}\}_{n\in\bN_0}\dd \{X_n\}_{n\in\bN_0}$, we have
\begin{align*}
\{X_{qn}\}_{n\in\bN_0}&\dd\left\{\sum_{m=1}^\infty\sum_{0<\ell<p^m, p\nmid \ell}A^{(m)}_\ell \left(e\left(\frac{n\ell}{p^m}\right)-1\right)\right\}_{n\in\bN_0}\\
&=\left\{\sum_{m=1}^\infty\sum_{0<\ell<p^m, p\nmid \ell}A^{(m)}_{[q\ell]} \left(e\left(\frac{n[q\ell]}{p^m}\right)-1\right)\right\}_{n\in\bN_0},
\end{align*}
where the second equality follows from the simple observation $\{\ell: 0<\ell<p^m, p\nmid \ell\}=\{[q\ell]: 0<\ell<p^m, p\nmid \ell\}$. By the uniqueness of the Fourier expansion, we must have
$$
\{A^{(m)}_\ell\}_{m\in\bN,0<\ell<p^m,p\nmid \ell}\dd \{A^{(m)}_{[q\ell]}\}_{m\in\bN,0<\ell<p^m,p\nmid \ell}.
$$

Conversely, assume (\ref{p}) and (\ref{q}) hold. Then for each $q\in\cP\setminus\{p\}$,
\begin{align*}
\{X_{qn}\}_{n\in\bN_0}&=\left\{\sum_{m=1}^\infty\sum_{0<\ell<p^m,\ p\nmid \ell}A^{(m)}_\ell\left(e\left(\frac{nq\ell}{p^m}\right)-1\right)\right\}_{n\in\bN_0}\\
&\dd\left\{\sum_{m=1}^\infty\sum_{0<\ell<p^m,\ p\nmid \ell}A^{(m)}_{[q\ell]}\left(e\left(\frac{nq\ell}{p^m}\right)-1\right)\right\}_{n\in\bN_0}\\
&= \left\{\sum_{m=1}^\infty\sum_{0<[q\ell]<p^m,\ p\nmid \ell}A^{(m)}_{[q\ell]}\left(e\left(\frac{n[q\ell]}{p^m}\right)-1\right)\right\}_{n\in\bN_0}\\
&=\{X_n\}_{n\in\bN_0}.
\end{align*}

Similarly,
\begin{align*}
\{X_{pn}\}_{n\in\bN_0}&=\left\{\sum_{m=2}^\infty\sum_{0<\ell<p^m,\ p\nmid \ell}A^{(m)}_\ell\left(e\left(\frac{n\ell}{p^{m-1}}\right)-1\right)\right\}_{n\in\bN_0}\\
&=\left\{\sum_{m=2}^\infty\sum_{0<\ell<p^{m-1},\ p\nmid \ell}\left(e\left(\frac{n\ell}{p^{m-1}}\right)-1\right)\sum_{t=0}^{p-1}A^{(m)}_{tp^{m-1}+\ell}\right\}_{n\in\bN_0}\\
&=\left\{\sum_{m=1}^\infty\sum_{0<\ell<p^{m},\ p\nmid \ell}\left(e\left(\frac{n\ell}{p^{m}}\right)-1\right)\sum_{t=0}^{p-1}A^{(m+1)}_{tp^{m}+\ell}\right\}_{n\in\bN_0}\\
&\dd\left\{\sum_{m=1}^\infty\sum_{0<\ell<p^{m},\ p\nmid \ell}\left(e\left(\frac{n\ell}{p^{m}}\right)-1\right)p^{-H}A^{(m)}_\ell\right\}_{n\in\bN_0}\\
&=\{p^{-H}X_n\}_{n\in\bN_0}.
\end{align*}

Thus, $\{X_n\}_{n\in\bN_0}$ is self-similar with scaling function $b(n)=(|n|_p)^H$.
\end{proof}

Combining the results of Lemma \ref{l:Akdescription}, Propositions \ref{p:convergence}, \ref{p:stationary} and \ref{p:selfsimilar} immediately leads to Theorem \ref{t:spectral}.

\section*{Acknowledgement} The authors would like to thank Gennady Samorodnitsky, Wanchun Shen, Ruodu Wang and Yimin Xiao for their valuable inputs. Yi Shen acknowledges financial support from the Natural Sciences and Engineering Research Council of Canada (RGPIN-2014-04840).

\end{document}